\newtheorem{theorem}{Theorem}
\newtheorem{lemma}[theorem]{Lemma}
\newtheorem{proposition}[theorem]{Proposition}
\newtheorem{corollary}[theorem]{Corollary}
\theoremstyle{definition}
\newtheorem{definition}[theorem]{Definition}
\newtheorem{example}[theorem]{Example}
\newtheorem{remark}[theorem]{Remark}
\newcommand{\N}{\mathbb{N}}  
\newcommand{\Z}{\mathbb{Z}}  
\newcommand{\R}{\mathbb{R}}  
\newcommand{\C}{\mathbb{C}}  
\newcommand{\D}{\mathbb{D}}  
\newcommand{\T}{\mathbb{T}}  
\newcommand{\eps}{\varepsilon} 
\newcommand{\ov} {\overline}   
\newcommand{\shs}{\hspace*{6mm}}
\begin{document}

\title{Expansivity and Shadowing in Linear Dynamics}

\author{Nilson C. Bernardes Jr.\thanks{Partially supported by CNPq.}\,, \
Patricia R. Cirilo \thanks{Partially supported by Fapesp \#2011/11663-5} ,\\
Udayan B. Darji, \
Ali Messaoudi\thanks{Partially supported by CNPq and by Fapesp \#2013/24541-0.} \ and \
Enrique R. Pujals\thanks{Partially supported by CNPq.}}

\date{ }

\maketitle
\begin{abstract}
In the early 1970's Eisenberg and Hedlund investigated relationships between
expansivity and spectrum of operators on Banach spaces. In this paper we
establish relationships between notions of expansivity and hypercyclicity,
supercyclicity, Li-Yorke chaos and shadowing. In the case that the Banach
space is $c_0$ or $\ell_p$ ($1 \leq p < \infty$), we give complete
characterizations of weighted shifts which satisfy various notions of
expansivity. We also establish new relationships between notions of
expansivity and spectrum. Moreover, we study various notions of shadowing
for operators on Banach spaces. In particular, we solve a basic problem in
linear dynamics by proving the existence of nonhyperbolic invertible
operators with the shadowing property. This also contrasts with the expected results for nonlinear dynamics on compact manifolds, illuminating the richness of  dynamics of infinite dimensional linear operators.
\footnote{
2010 {\em Mathematics Subject Classification:} Primary 47A16. Secondary
37B05, 37C50, 54H20.

{\em Keywords:} Expansive, spectrum, hypercyclic, supercyclic, Li-Yorke,
hyperbolic, shadowing, weighted shifts.}
\end{abstract}


\section{Introduction}

\shs
The study of the dynamics of continuous linear operators on
infinite dimensional Banach (or Fr\'echet) spaces has witnessed a great
development during the last three decades and many links between this area
and other areas of mathematics, such as ergodic theory, number theory and
geometry of Banach spaces, have been established. We refer the reader to the
books \cite{FBayEMat09,KGroAPer11} and to the more recent papers
\cite{FBayIRuz15,BerBonMulPer13,BerBonMulPer14,BesMenPerPui16,SGriEMat14,
SGriMRog14}, where many additional references can be found.

\smallskip
On the other hand, the notions of expansivity and shadowing play important
roles in many branches of the area of dynamical systems, including
topological dynamics, differentiable dynamics and ergodic theory;
see \cite{NAokKHir94,SPil99,SPil12,PWal82}, for instance.

\smallskip
Our goal in this paper is to investigate the notions of expansivity and
shadowing in the context of linear dynamics, thereby complementing previous
works by various authors. In particular, we give a class of examples of linear operators exhibiting a shadowing property which are neither hyperbolic nor expansive, however they are chaotic. These types of examples show  the richness of linear dynamics and its difference from finite dimensional nonlinear dynamics, yielding counterintuitive results  to the corresponding ones from finite dimensional smooth  dynamics.

\smallskip
Let us now discuss our results in detail and describe the organization of the article. 

\smallskip
In Section 2, we investigate relationships between various notions of
expansivity and some popular notions in linear dynamics, namely:
hypercyclicity, supercyclicity and Li-Yorke chaos (Definition~\ref{HSLY}).
In particular, we prove that a uniformly expansive operator cannot be
Li-Yorke chaotic and hence it cannot be hypercyclic (Theorem~A),
but we observe that every infinite-dimensional separable Banach space
supports a supercyclic uniformly expansive operator (Remark~\ref{SCUPE}).
On the other hand, we prove that a hyperbolic operator with nontrivial
hyperbolic splitting cannot be supercyclic (Proposition~\ref{HNS}).

\smallskip
In Section 3, we consider weighted shifts. Due to their importance in
operator theory and its applications, the study of the dynamics of weighted
shifts has received special attention from the specialists in linear
dynamics. Many dynamical properties have been extensively studied and,
in some cases, complete characterizations have been obtained. For instance,
Salas \cite{HSal95} characterized hypercyclicity and weak mixing whereas
Costakis and Sambarino \cite{GCosMSam04} characterized mixing for unilateral
and bilateral weighted shifts on $\ell_2(\N)$ and $\ell_2(\Z)$, respectively.
We obtain here complete characterizations of various notions of expansivity
for unilateral and bilateral weighted shifts on the Banach spaces $c_0(A)$
and $\ell_p(A)$ ($1 \leq p < \infty$), where $A = \N$ or $\Z$ (Theorem~B
and Propositions~\ref{BWFS} and \ref{BWBS}). As applications we obtain
examples of hypercyclic positively expansive operators and of supercyclic
uniformly positively expansive operators (Examples~\ref{PEHC} and \ref{UPESC}).

\smallskip
In Section 4, we investigate the relationship between expansivity of an
operator and its spectrum. In particular, we expand earlier results of
Eisenberg and Hedlund \cite{MEis66,MEisJHed70} and Mazur \cite{MMaz00}.
In 1966 Eisenberg \cite{MEis66} proved that if $T$ is an invertible operator
on $\C^n$, then $T$ is expansive if and only if $T$ has no eigenvalue on the
unit circle $\T$. Subsequently, Eisenberg and Hedlund \cite{MEisJHed70}
studied expansive and uniformly expansive operators on Banach spaces.
They showed that if $T$ is uniformly expansive, then $\sigma_a(T)$,
the approximate point spectrum of $T$, does not intersect $\T$.
The converse was shown for invertible operators by Hedlund \cite{JHed71}.
As a corollary, they obtained that invertible hyperbolic operators are
uniformly expansive. Relations between hyperbolicity and the shadowing
property for operators were studied by Ombach \cite{JOmb94} and
Mazur \cite{MMaz00}. In \cite{MMaz00} it was also shown that an invertible
normal operator $T$ on a Hilbert space $H$ is expansive if and only if
$\sigma_p(T^*T)$, the point spectrum of $T^*T$, does not intersect $\T$.
We show that for a uniformly positively expansive operator $T$ on a Banach
space $X$, $\sigma_a(T)$ does not intersect the closed unit disc $\ov{\D}$,
and the converse holds if $T$ is invertible (Theorem~C).
Moreover, we expand Mazur's result by giving a necessary and sufficient
condition for a normal operator to be positively expansive
(Proposition~\ref{NormalPE}). Our techniques also yield a simpler proof
of his result (Theorem~\ref{NormalExp}).

\smallskip
In Section 5, we investigate the notions of shadowing, limit shadowing
and $\ell_p$ shadowing for invertible operators on Banach spaces.
It is well-known that invertible hyperbolic operators have the shadowing
property and that the converse holds for invertible operators on
finite dimensional euclidean spaces \cite{JOmb94} and for invertible normal
operators on Hilbert spaces \cite{MMaz00}.
Moreover, the converse also holds for certain sequences of finite dimensional
operators considered in \cite{SPilSTik10}. This implies that for $C^1$
diffeomorphisms of $m$-dimensional closed smooth manifolds, hyperbolicity
is equivalent to expansivity plus Lipschitz shadowing \cite{SPilSTik10}.
A basic question in linear dynamics is whether the shadowing
property implies hyperbolicity for invertible operators on Banach
(or Hilbert) spaces.
This question appeared explicitly in \cite[Page 148]{MMaz00}, for instance.
In Theorem~D, we answer this question in the negative by proving the
existence of operators with the shadowing property that exhibit several types
of chaotic behaviors (they are simultaneously frequently hypercyclic,
Devaney chaotic, mixing and densely distributionally chaotic) and,
in particular, are not even expansive. Moreover, such type of examples are robust inside the weighted shifted class, meaning that the same properties are share by the perturbed ones.

We also establish a generalization of the aforementioned result from
\cite{MMaz00} (Theorem~\ref{ShadowingOn}) and prove that every expansive
operator with the shadowing property is uniformly expansive
(Proposition~\ref{ESPUE}).

\smallskip
In the final Section 6, we investigate analogous results for operators
which are not necessarily invertible. In Theorem~E, we show that hyperbolic
operators always have the positive shadowing property, the positive limit
shadowing property and the positive $\ell_p$ shadowing property for all
$1 \leq p < \infty$. The converse is not true in general
(Remark~\ref{NotConverse}). We also show that all these notions are
equivalent for compact operators (Theorem~\ref{CompactShadowing}) and that
positive shadowing and hyperbolicity coincide for normal operators
(Theorem~\ref{NormalShadowing2}).


\section{Expansive behavior of operators}

\shs As usual, $\N$ denotes the set of all positive integers and
$\N_0 = \N \cup \{0\}$.
Given a (real or complex) Banach space $X$, $S_X$ denotes the
{\em unit sphere} of $X$, i.e., $S_X = \{x \in X : \|x\| = 1\}$.
Moreover, by an {\em operator} on $X$ we mean a bounded linear map $T$
from $X$ into $X$.
The {\em spectrum} of an operator $T$ on a complex Banach space $X$
is the set
$$
\sigma(T) = \{\lambda \in \C : T - \lambda I \text{ is not invertible}\}.
$$
It is well-known that $\sigma(T)$ is a nonempty compact subset of $\C$.
In the case $T$ is an operator on a real Banach space $X$, we define
the {\em spectrum} of $T$ as the spectrum of its complexification $T_\C$,
that is,
$$
\sigma(T) = \sigma(T_\C).
$$
It is well-known that the spectrum can be divided into three disjoint sets:
the {\em point spectrum} $\sigma_{p}(T)$, the {\em continuous spectrum}
$\sigma_c(T)$ and the {\em residual spectrum} $\sigma_r(T)$.
Recall that $\lambda$ belongs to $ \sigma_p(T)$ if $T - \lambda I$ is not
one-to-one. If $T - \lambda I$ is one-to-one but not onto, then
$\lambda \in \sigma_c(T)$ if $(T- \lambda I) (X)$ is dense in $X$ and
$\lambda \in \sigma_r(T)$ otherwise.
We also recall that the {\em approximate point spectrum} of $T$, denoted by
$\sigma_a(T)$, is the set of all $\lambda \in \C$ for which there is a
sequence $(x_n)$ in $S_X$ with $\|\lambda x_n - Tx_n\| \to 0$ as
$n \to \infty$. It is classical (see \cite{Y}) that
\begin{equation}
\partial \sigma(T) \subset \sigma_a(T) \subset \sigma(T), \ \
\sigma(T) = \sigma_r(T) \cup \sigma_a(T) \ \text{ and } \
\sigma_r(T) \subset \sigma_p(T^*),\label{sig1}
\end{equation}
where $\partial \sigma(T)$ is the boundary of $\sigma (T)$ and $T^*$ is the
adjoint of $T$ acting on the dual space $X^*$ of $X$.

\begin{definition}\label{Def1}
An invertible operator $T$ on a Banach space $X$ is said to be
{\em expansive} ({\em positively expansive}) if for every $z \in S_X$,
there exists $n \in \Z$ ($n \in \N$) such that $\|T^nz\| \geq 2$.
\end{definition}

\begin{definition}\label{Def2}
An invertible operator $T$ on a Banach space $X$ is said to be
{\em uniformly expansive} ({\em uniformly positively expansive}) if
there exists $n \in \N$ such that
$$
z \in S_X \implies \|T^nz\| \geq 2 \text{ or } \|T^{-n}z\| \geq 2
  \ \ \ \ \ (z \in S_X \implies \|T^nz\| \geq 2).
$$
\end{definition}

We remark that for the definitions of positive expansivity and uniform
positive expansivity, $T$ need not be invertible. Also, there is nothing
special about the number $2$ in the above definitions. One can replace $2$
by any number $c > 1$. Moreover, the above definition of expansivity agrees
with the usual definition of expansivity in metric spaces, since it is
equivalent to the existence of a constant $e > 0$ such that, for any pair
$x,y$ of distinct points in $X$, there exists $n \in \Z$ with
$\|T^nx - T^ny\| \geq e$.

\begin{remark}
In the case $T$ is an operator on a real Banach space $X$, the
(uniform) (positive) expansivity of $T$ is equivalent to the corresponding
property for its complexification $T_\C$.
\end{remark}

\begin{definition}\label{Def3}
An operator $T$ on a Banach space $X$ is said to be {\em hyperbolic} if
$$
\sigma(T) \cap \T = \emptyset,
$$
where $\T$ denotes the unit circle in the complex plane $\C$.
\end{definition}

It is classical that $T$ is hyperbolic if and only if there are an equivalent
norm $\|\cdot\|$ on $X$ and a splitting
$$
X = X_s \oplus X_u, \ \ \ \ \ T = T_s \oplus T_u
$$
(the {\em hyperbolic splitting} of $T$), where $X_s$ and $X_u$ are closed
$T$-invariant subspaces of $X$ (the {\em stable} and the {\em unstable
subspaces} for $T$, respectively), $T_s = T|_{X_s}$ is a proper contraction
(i.e., $\|T_s\| < 1$), $T_u = T|_{X_u}$ is invertible and is a proper
dilation (i.e., $\|T_u^{-1}\| < 1$), and the identification of $X$ with
the product $X_s \times X_u$ identifies $\|\cdot\|$ with the max norm
on the product.

\smallskip
It is also known \cite{JHed71} that $T$ is uniformly expansive if and only if
$$
\sigma_a(T) \cap \T = \emptyset.
$$
Hence, every invertible hyperbolic operator is uniformly expansive.

\smallskip
The next result gives simple characterizations of the notions of
expansivity by means of the behaviors of orbits.

\begin{proposition}\label{Characterizations}
Let $T$ be an operator on a Banach space $X$. Then:
\begin{enumerate}
\item [\rm (a)] $T$ is positively expansive $\Leftrightarrow$
      $\sup_{n \in \N} \|T^nx\| = \infty$ for every nonzero $x \in X$.
\item [\rm (b)] $T$ is uniformly positively expansive $\Leftrightarrow$
      $\lim_{n \to \infty} \|T^nx\| = \infty$ uniformly on $S_X$.
\end{enumerate}
If, in addition, $T$ is invertible, then:
\begin{enumerate}
\item [\rm (c)] $T$ is expansive $\Leftrightarrow$
      $\sup_{n \in \Z} \|T^nx\| = \infty$ for every nonzero $x \in X$.
\item [\rm (d)] $T$ is uniformly expansive $\Leftrightarrow$
      $S_X = A \cup B$ where $\lim_{n \to \infty} \|T^nx\| = \infty$
      uniformly on $A$ and $\lim_{n \to \infty} \|T^{-n}x\| = \infty$
      uniformly on $B$.
\end{enumerate}
\end{proposition}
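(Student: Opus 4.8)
The plan is to exploit homogeneity throughout: since $\|T^n(\lambda x)\| = |\lambda|\,\|T^n x\|$, each of (a)--(d) reduces to the behaviour of orbits on the unit sphere $S_X$, and by the remark following Definition~\ref{Def2} I may freely work with the constant $2$. In every case the ``$\Leftarrow$'' implication is immediate: for instance, if $\sup_{n}\|T^n x\|=\infty$ for each nonzero $x$ then every $x\in S_X$ has some forward iterate of norm $\geq 2$, giving positive expansivity; and in (b) (resp.\ (d)) uniform divergence produces a single $N$ that witnesses uniform (resp.\ uniform) positive expansivity. So the content lies in the forward directions.

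For (a) and (c) I would run a bootstrapping (telescoping) argument. Given $x\in S_X$, positive expansivity yields $m_1$ with $\|T^{m_1}x\|\geq 2$; normalizing $x_1=T^{m_1}x/\|T^{m_1}x\|\in S_X$ and reapplying gives $m_2$ with $\|T^{m_2}x_1\|\geq 2$, whence $\|T^{m_1+m_2}x\|\geq 2\|T^{m_1}x\|\geq 4$. Iterating produces $\|T^{m_1+\cdots+m_k}x\|\geq 2^k$, so the supremum over forward iterates is infinite. Part (c) is verbatim the same, with $m_i\in\Z$ and $T$ invertible so that the (possibly negative) partial sums still index genuine iterates, giving $\sup_{n\in\Z}\|T^n x\|=\infty$.

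For (b), uniform positive expansivity hands us a single $N$ with $\|T^N x\|\geq 2$ for all $x\in S_X$, i.e.\ $\|T^N x\|\geq 2\|x\|$ for all $x\in X$; iterating this one inequality gives $\|T^{kN}x\|\geq 2^k$ on $S_X$. A routine interpolation then upgrades this to uniform divergence over all $n$: writing $n=kN+r$ with $0\leq r<N$ and using $\|T^{(k+1)N}x\|\leq \|T^{\,N-r}\|\,\|T^n x\|\leq C\,\|T^n x\|$ with $C=\max_{0\leq j\leq N}\|T^j\|$, one gets $\|T^n x\|\geq 2^{k+1}/C$, which tends to $\infty$ uniformly on $S_X$ as $n\to\infty$.

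The delicate case, and the one I expect to be the main obstacle, is (d): uniform expansivity only provides a \emph{pointwise} dichotomy (each $x\in S_X$ satisfies $\|T^N x\|\geq 2$ or $\|T^{-N}x\|\geq 2$), and one must manufacture a genuine decomposition $S_X=A\cup B$ with uniform divergence on each piece. I would pass to $S=T^N$ and split $X$ into the cones $\cA=\{x:\|Sx\|\geq 2\|x\|\}$ and $\cB=\{x:\|S^{-1}x\|\geq 2\|x\|\}$, whose union is all of $X$ by the dichotomy. The only nonroutine step is that $\cA$ is forward $S$-invariant: if $x\in\cA$ but $Sx\notin\cA$, then $Sx\in\cB$, forcing $\|x\|=\|S^{-1}(Sx)\|\geq 2\|Sx\|\geq 4\|x\|$, a contradiction. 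This invariance yields $\|S^k x\|\geq 2^k\|x\|$ for $x\in\cA$, and symmetrically $\|S^{-k}x\|\geq 2^k\|x\|$ for $x\in\cB$. Setting $A=\cA\cap S_X$ and $B=\cB\cap S_X$ and interpolating exactly as in (b) (now also applying the estimate to $T^{-1}$ on $B$) gives the required uniform convergence $\|T^n x\|\to\infty$ on $A$ and $\|T^{-n}x\|\to\infty$ on $B$, completing the proof.
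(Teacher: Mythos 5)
Your proposal is correct and follows essentially the same route as the paper: the telescoping/normalization argument for (a) and (c), the single inequality $\|T^Nx\|\geq 2\|x\|$ iterated and interpolated for (b), and for (d) the same key step — showing that the set where $\|T^Nx\|\geq 2\|x\|$ is forward-invariant under the normalized iteration via the contradiction $\|x\|\geq 4\|x\|$ — followed by the same division-with-remainder interpolation. Phrasing (d) in terms of cones in $X$ rather than normalized vectors on $S_X$ is only a cosmetic difference.
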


\begin{proof}
(a) and (c) follow easily from the fact, already mentioned, that the constant
$2$ that appears in Definitions \ref{Def1} and \ref{Def2} can be replaced by
any constant $c > 1$.

Let us prove (d). Since the sufficiency of the condition is clear, we have
only to prove its necessity. Suppose that $T$ is uniformly expansive and
let $n \in \N$ be as in Definition~\ref{Def2}. Let
$$
A = \{x \in S_X : \|T^nx\| \geq 2\} \ \ \ \text{ and } \ \ \
B = \{x \in S_X : \|T^{-n}x\| \geq 2\}.
$$
Then, $S_X = A \cup B$. We claim that
$$
\frac{T^nx}{\|T^nx\|} \in A \ \ \text{ whenever } x \in A.
$$
Indeed, if $x \in A$ and $y = \frac{T^nx}{\|T^nx\|} \not\in A$, then
$y \in B$ and so $\|T^{-n}y\| \geq 2$, implying that
$\|x\| \geq 2 \|T^nx\| \geq 4$, a contradiction.
Hence, given $x \in A$, we can define inductively a sequence
$(x_k)_{k \in \N}$ in $A$ by putting $x_1 = x$ and
$x_k = \frac{T^nx_{k-1}}{\|T^nx_{k-1}\|}$ for $k \geq 2$.
It follows from the definition that
$$
x_k = \frac{T^{(k-1)n}x}{\|T^nx_1\| \cdot \ldots \cdot \|T^nx_{k-1}\|} \ \
  \text{ for all } k \in \N.
$$
Since $\|T^nx_k\| \geq 2$ for all $k \in \N$, we obtain
$$
\|T^{kn}x\| \geq 2 \|T^nx_1\| \cdot \ldots \cdot \|T^nx_{k-1}\| \geq 2^k \ \
  \text{ for all } k \in \N.
$$
Let $C = \max_{0 \leq j \leq n-1} \|T^j\| \geq 1$. For each $m \in \N$,
we can write $m = k_m n - j_m$ with unique $k_m \in \N$ and
$j_m \in \{0,\ldots,n-1\}$, and so
$$
\|T^mx\| \geq \frac{2^{k_m}}{\|T^{j_m}\|} \geq \frac{2^{k_m}}{C}\cdot
$$
Since $x \in A$ is arbitrary and $k_m \to \infty$ as $m \to \infty$,
we conclude that $\lim_{m \to \infty} \|T^mx\| = \infty$ uniformly on $A$.
The proof that $\lim_{m \to \infty} \|T^{-m}x\| = \infty$ uniformly on $B$
is analogous.

The proof of (b) is simpler than that of (d) and so we omit it.
\end{proof}

\begin{remark}
The sets $A$ and $B$ in Proposition~\ref{Characterizations}(d) can be chosen
to be disjoint or to be both closed in $S_X$ or to be both open in $S_X$.
\end{remark}

We shall now show that uniformly (positively) expansive operators do not
exhibit chaotic behavior. First, let us recall a few definitions.

\begin{definition}\label{HSLY}
An operator $T$ on a Banach space $X$ is said to be {\em Li-Yorke chaotic}
if it has an uncountable {\em scrambled set} $U$, i.e., for all $x, y \in U$
with $x \neq y$, we have that
$$
\liminf_{n \to \infty} \|T^nx - T^ny\| = 0 \ \ \text{ and } \ \
\limsup_{n \to \infty} \|T^nx - T^ny\| > 0.
$$
We say that $T$ is {\em hypercyclic} if it has a dense orbit, i.e.,
$$
\{T^nx: n \geq 0\}
$$
is dense in $X$ for some $x \in X$. Finally, $T$ is {\em supercyclic}
if there exists $x \in X$ whose projective orbit
$$
\{\lambda T^nx : n \geq 0, \lambda \text{ scalar}\}
$$
is dense in $X$.
\end{definition}

\medskip
\noindent {\bf Theorem A.} {\it A uniformly (positively) expansive operator
on a Banach space cannot be Li-Yorke chaotic. In particular, it cannot be
hypercyclic.}

\medskip
\begin{proof}
Let us consider the case of a uniformly expansive (necessarily invertible)
operator $T$ on a Banach space $X$ (the case of a uniformly positively
expansive (not necessarily invertible) operator is simpler). Write
$S_X = A \cup B$ as in Proposition~\ref{Characterizations}(d).
It was proved in \cite[Theorem 5]{BerBonMarPer11} that $T$ is Li-Yorke
chaotic if and only if $T$ admits an {\em irregular vector}, that is,
a vector $x \in X$ such that
$$
\inf_{n \in \N} \|T^nx\| = 0 \ \ \ \text{ and } \ \ \
\sup_{n \in \N} \|T^nx\| = \infty.
$$
Suppose that $T$ is Li-Yorke chaotic and let $y \in S_X$ be an irregular
vector for $T$. We must have
$$
\frac{T^ky}{\|T^ky\|} \in B \ \ \text{ for all } k \in \N.
$$
Indeed, if $\frac{T^ky}{\|T^ky\|} \in A$ for some $k \in \N$, then
$\lim_{n \to \infty} \big\|T^n\big(\frac{T^ky}{\|T^ky\|}\big)\big\| = \infty$,
which implies that $\lim_{n \to \infty} \|T^ny\| = \infty$ and contradicts
the fact that $y$ is an irregular vector for $T$.
Since $\lim_{n \to \infty} \|T^{-n}x\| = \infty$ uniformly on $B$,
there exists $n_0 \in \N$ such that
\begin{equation}
\|T^{-n}x\| \geq 2 \ \ \text{ whenever } x \in B \text{ and } n \geq n_0.
\label{eq}
\end{equation}
Since $y$ is an irregular vector for $T$, we can choose $k_0 \geq n_0$
such that $\|T^{k_0}y\| \geq 1$. Now, by choosing $n = k_0 \geq n_0$ and
$x = \frac{T^{k_0}y}{\|T^{k_0}y\|} \in B$ in (\ref{eq}), we obtain
$\|y\| \geq 2\|T^{k_0}y\| \geq 2$. This contradiction proves the theorem.
\end{proof}

\begin{remark}
The fact that a uniformly expansive operator $T$ on a Banach space $X$ cannot
be hypercyclic can be seen by means of a spectral argument. Indeed, if $T$ is
hypercyclic, then its spectrum $\sigma(T)$ intersects the unit circle and the
point spectrum $\sigma_p(T^*)$ of the adjoint operator $T^*$ is empty
(see \cite{FBayEMat09}). Since $\sigma_r(T) \subset \sigma_p(T^*)$, we deduce
that $\sigma_r(T)$ is empty. Since $\sigma(T) = \sigma_r(T) \cup \sigma_a(T)$,
we have that $\sigma_a(T)$ intersects the unit circle and thus $T$ is not
uniformly expansive.
\end{remark}

On the other hand, there exist supercyclic uniformly (positively) expansive
operators, as we shall see in the remark below. A simple concrete example
of such an operator on the Hilbert space $\ell_2$ will be given in
Example~\ref{UPESC}.

\begin{remark}\label{SCUPE}
Every infinite-dimensional separable Banach space admits an invertible
operator which is uniformly positively expansive and supercyclic.

\smallskip
Indeed, it is well-known that every infinite-dimensional separable Banach
space $X$ supports a hypercyclic invertible operator $S$
(see \cite[Section~8.2]{KGroAPer11}).
Since any nonzero scalar multiple of a supercyclic operator is a
supercyclic operator,
$$
T = 2 \|S^{-1}\| S
$$
is a supercyclic operator on $X$. Moreover, since $\|T^{-1}\| = \frac{1}{2}
< 1$, $T$ is a proper dilation. In particular, $T$ is uniformly positively
expansive.
\end{remark}

\begin{remark}
A positively expansive operator can be Li-Yorke chaotic. For example,
Beau\-zamy \cite{BBea88} and Pr\v{a}jitur\v{a} \cite{GPra09}
constructed examples of {\em completely irregular operators} on the
Hilbert space $\ell_2$. These are operators with the property that
every nonzero vector is irregular. It follows from
Proposition~\ref{Characterizations}(a) and \cite[Theorem 34]{BerBonMulPer14}
that every completely irregular operator on a Banach space is simultaneously
positively expansive and generically Li-Yorke chaotic.
Also, Read~\cite{CRea88} constructed an operator $T$ on $\ell_1$
with all nonzero vectors hypercyclic. This operator is simultaneously
positively expansive, generically Li-Yorke chaotic and hypercyclic.
Moreover, we shall see later an example of an invertible operator on the
Hilbert space $\ell_2$ which is positively expansive (hence expansive) and
hypercyclic (Example~\ref{PEHC}).
\end{remark}

\smallskip
As we saw in Remark~\ref{SCUPE}, a uniformly expansive operator can be
supercyclic. However, this is not the case for hyperbolic operators with
nontrivial hyperbolic splittings.

\begin{proposition}\label{HNS}
If $T$ is a hyperbolic operator with nontrivial hyperbolic splitting,
then $T$ is not supercyclic.
\end{proposition}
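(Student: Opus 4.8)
The plan is to use the hyperbolic splitting together with a Baire category argument. I work with the equivalent adapted norm, so that on $X = X_s \oplus X_u$ the norm is the max norm of the product, $\|T_s\| < 1$ and $\|T_u^{-1}\| < 1$; nontriviality of the splitting means $X_s \neq \{0\}$ and $X_u \neq \{0\}$. Writing each vector as $v = (v_s, v_u)$, one has $T^n v = (T_s^n v_s, T_u^n v_u)$ and $\|T^n v\| = \max\{\|T_s^n v_s\|, \|T_u^n v_u\|\}$.

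Suppose, toward a contradiction, that $x = (x_s, x_u)$ is a supercyclic vector. First I dispose of the degenerate cases: if $x_u = 0$ the projective orbit lies in the proper closed subspace $X_s$, while if $x_s = 0$ it lies in the proper closed subspace $X_u$; in either case it cannot be dense. So I may assume $x_s \neq 0$ and $x_u \neq 0$. The key quantitative input is the ratio $r_n = \|T_s^n x_s\| / \|T_u^n x_u\|$. Since $\|T_s^n x_s\| \le \|T_s\|^n \|x_s\| \to 0$ and $\|T_u^n x_u\| \ge \|x_u\| / \|T_u^{-1}\|^n \to \infty$, I get $r_n \to 0$.

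The projective orbit is exactly $\bigcup_{n \ge 0} L_n$, where $L_n = \{\lambda T^n x : \lambda \text{ scalar}\}$ is the (one-dimensional, hence closed) line spanned by $T^n x$; note $T^n x \neq 0$ because $T_u$ is invertible and $x_u \neq 0$. I then show that the closure of the projective orbit is contained in $\left(\bigcup_{n\ge 0} L_n\right) \cup X_u$. Indeed, given $\lambda_k T^{n_k} x \to w = (w_s, w_u)$, if the $n_k$ stay bounded then along a subsequence $n_k$ is constant, so $w$ lies in some $L_n$; and if $n_k \to \infty$, then $\|\lambda_k T_u^{n_k} x_u\|$ converges and is therefore bounded, so $\|\lambda_k T_s^{n_k} x_s\| = r_{n_k}\,\|\lambda_k T_u^{n_k} x_u\| \to 0$, which forces $w_s = 0$, i.e. $w \in X_u$.

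To conclude, each $L_n$ is a proper closed subspace (since $\dim X \ge 2$) and $X_u$ is a proper closed subspace (since $X_s \neq \{0\}$); all of these are nowhere dense. Hence the closure of the projective orbit is a countable union of nowhere dense sets, so by the Baire category theorem it cannot be all of $X$. Thus the orbit is not dense, contradicting supercyclicity, and since $x$ was arbitrary, $T$ is not supercyclic. The step requiring the most care — the main obstacle — is the control of the limit points arising from $n_k \to \infty$: the whole argument hinges on the observation that $r_n \to 0$ drives every such limit into $X_u$, which is precisely what distinguishes the hyperbolic case from, say, a pure dilation whose projective orbit could in principle be dense.
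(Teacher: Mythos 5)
Your proof is correct. It uses the same decomposition as the paper --- the adapted norm with $X=X_s\oplus X_u$, $\|T_s\|<1$, $\|T_u^{-1}\|<1$ --- and the same key quantitative input, namely that the stable component of the orbit decays relative to the unstable one ($r_n=\|T_s^nx_s\|/\|T_u^nx_u\|\to 0$). Where you diverge is in the finishing move. The paper picks $n_0$ with $\|T^ny_u\|\ge 2\|T^ny_s\|$ for $n\ge n_0$ and observes directly that the tail $D=\{\lambda T^ny:\lambda\ \text{scalar},\ n\ge n_0\}$ of the projective orbit lies in the closed cone $\{z:\|z_u\|\ge 2\|z_s\|\}$, which misses a neighbourhood of any nonzero point of $X_s$ and so cannot be dense; no category argument is needed. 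You instead identify the full limit set: every accumulation point of $\lambda_kT^{n_k}x$ with $n_k\to\infty$ is forced into $X_u$, so the closure of the projective orbit sits inside $\bigl(\bigcup_{n}L_n\bigr)\cup X_u$, a countable union of closed nowhere dense sets, and Baire finishes the job. Your route is slightly heavier (it invokes completeness via Baire, and needs the small subsequence dichotomy for $(n_k)$, which you state a bit loosely --- a sequence need not be bounded or tend to infinity, though passing to subsequences repairs this trivially), but it buys a sharper conclusion, pinning down exactly where the orbit can accumulate rather than merely confining its tail to a cone. Both arguments rest on the same dynamical mechanism, and either is a complete proof.
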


\begin{proof}
By hypothesis, there is a splitting
$$
X = X_s \oplus X_u, \ \ \ \ \ T = T_s \oplus T_u,
$$
as above, with $X_s \neq \{0\}$ and $X_u \neq \{0\}$.
By renorming $X$ we may assume that $\|T_s\| < 1$ and $\|T_u^{-1}\| < 1$.
Each vector $x \in X$ has a unique decomposition $x = x_s + x_u$
with $x_s \in X_s$ and $x_u \in X_u$.
Moreover, by the open mapping theorem, the mapping
$x \in X \mapsto (x_s,x_u) \in X_s \times X_u$ is an isomorphism.
Suppose that $T$ admits a supercyclic vector $y \in X$.
It must be true that $y_s \neq 0$ and $y_u \neq 0$.
Since $\|T^ny_s\| \to 0$ and $\|T^ny_u\| \to \infty$, there exists
$n_0 \in \N$ such that
$$
\|T^ny_u\| \geq 2\|T^ny_s\| \ \ \text{ whenever } n \geq n_0.
$$
On one hand, the set
$$
D = \{\lambda T^ny : \lambda \text{ is a scalar and } n \geq n_0\}
$$
is dense in $X$. But on the other hand, each element $z = \lambda T^ny \in D$
has decomposition $z = z_s + z_u = \lambda T^ny_s + \lambda T^ny_u$
satisfying $\|z_u\| \geq 2 \|z_s\|$, and so $D$ cannot be dense in $X$.
This contradiction proves the proposition.
\end{proof}

\begin{remark}
Another way to see the last result in the case of complex scalars comes from
the fact that if $T$ is supercyclic, then there exists $R \geq 0$ such that
each connected component of the spectrum of $T$ intersects the (possibly
degenerate) circle $\{z \in \C : |z| = R\}$ (see \cite{FBayEMat09}).
This is impossible if $T$ is a hyperbolic operator with nontrivial
hyperbolic splitting, since the unit circle separates at least two connected
components of $\sigma(T)$.
\end{remark}


\section{Expansive weighted shifts}

\shs In this section we characterize the various notions of expansivity
for weighted shifts by looking at their weights.

\smallskip
For each real number $p \in [1,\infty)$, we denote by $\ell_p(\Z)$ the
Banach space of all sequences $x = (x_n)_{n \in \Z}$ of scalars such that
$\sum_{n \in \Z} |x_n|^p < \infty$, endowed with the norm
$$\|x\| = \Bigg(\sum_{n \in \Z} |x_n|^p\Bigg)^\frac{1}{p}.$$
In particular, $\ell_2(\Z)$ is a Hilbert space with respect to the inner
product
$$\langle x,y \rangle = \sum_{n \in \Z} x_n\ov{y_n}.$$
Moreover, $c_0(\Z)$ denotes the Banach space of all sequences
$x = (x_n)_{n \in \Z}$ of scalars such that $\lim_{n \to \pm \infty} x_n = 0$,
endowed with the norm
$$\|x\| = \sup_{n \in \Z} |x_n|.$$
The Banach spaces $\ell_p(\N)$ ($1 \leq p < \infty$) and $c_0(\N)$ are
defined analogously.

\smallskip
If $X = \ell_p(\Z)$ $(1 \leq p < \infty)$ or $X = c_0(\Z)$, then
$F_w : X \to X$ ($B_w : X \to X$) denotes the {\em bilateral weighted forward}
({\em backward}) {\em shift} on $X$ given by
$$
F_w\big((x_n)_{n \in \Z}\big) = (w_{n-1}x_{n-1})_{n \in \Z} \ \ \ \
\big(B_w\big((x_n)_{n \in \Z}\big) = (w_{n+1}x_{n+1})_{n \in \Z}\big),
$$
where $w = (w_n)_{n \in \Z}$ is a bounded sequence of scalars, called a
{\em weight sequence}. Recall that
$$
F_w \ (B_w) \text{ is invertible } \
  \Longleftrightarrow \ \ \inf_{n \in \Z} |w_n| > 0.
$$
In the case $X = \ell_p(\N)$ $(1 \leq p < \infty)$ or $X = c_0(\N)$, we also
denote by $F_w : X \to X$ ($B_w : X \to X$) the {\em unilateral weighted
forward} ({\em backward}) {\em shift} on $X$ with {\em weight sequence}
$w = (w_n)_{n \in \N}$, which is defined by
$$
F_w\big((x_1,x_2,\ldots)\big) = (0,w_1x_1,w_2x_2,\ldots) \ \ \ \
\big(B_w\big((x_1,x_2,\ldots)\big) = (w_2x_2,w_3x_3,\ldots)\big).
$$
We remark that in this case the weight sequence $w$ is also assumed to be
bounded.

\smallskip
We begin by characterizing (uniform) expansivity for invertible bilateral
weighted forward shifts. For this purpose, we will need the following fact.

\begin{lemma}\label{Lemma1}
If $\{I,J\}$ is a nontrivial partition of $\Z$ (that is, $I \cup J = \Z$,
$I \cap J = \emptyset$, $I \neq \emptyset$ and $J \neq \emptyset$),
$\varphi : \Z \to [0,\infty)$ is a map,
$$
\lim_{n \to \infty} \big[\inf_{k \in I}
  \big(\varphi(k) \cdot\ldots\cdot \varphi(k+n-1)\big)\big] > 1
$$
and
$$
\lim_{n \to \infty} \big[\sup_{k \in J}
  \big(\varphi(k-n) \cdot\ldots\cdot \varphi(k-1)\big)\big] < 1,
$$
then there exist $i,j \in \Z$ such that
$$
(-\infty,j] \cap \Z \subset J \ \ \ \text{ and } \ \ \
[i,\infty) \cap \Z \subset I.
$$
\end{lemma}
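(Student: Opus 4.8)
The plan is to distill the two limiting hypotheses into a single integer threshold and then to show that no element of $I$ can lie far to the left of an element of $J$. Since the first limit exceeds $1$ and the second is strictly below $1$, I would first fix $N \in \N$ so large that simultaneously
$$
\inf_{k \in I}\big(\varphi(k)\cdots\varphi(k+n-1)\big) > 1
\quad\text{and}\quad
\sup_{k \in J}\big(\varphi(k-n)\cdots\varphi(k-1)\big) < 1
\quad\text{for all } n \geq N.
$$
In other words, for every $n \geq N$ each forward product of length $n$ that begins at a point of $I$ is $>1$, while each product of length $n$ that ends just before a point of $J$ is $<1$.

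The key step is the following incompatibility. Suppose $k \in I$ and $j \in J$ satisfy $j - k \geq N$, and put $n = j - k \geq N$. Applying the first estimate to $k \in I$ yields $\varphi(k)\varphi(k+1)\cdots\varphi(j-1) > 1$, whereas applying the second estimate to $j \in J$ yields the very same product $\varphi(j-n)\cdots\varphi(j-1) = \varphi(k)\cdots\varphi(j-1) < 1$, a contradiction. The crux is to recognize that the two hypotheses constrain one and the same finite product from opposite sides, once the forward index range $[k,j-1]$ of the first condition is matched with the backward index range $[j-n,j-1]$ of the second. Hence no such pair exists, i.e.\ $j - k < N$ for all $k \in I$ and $j \in J$.

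With this inequality the conclusion follows at once. Since $J \neq \emptyset$, fix $j \in J$; then $k > j - N$ for every $k \in I$, so $I$ is bounded below, and consequently every integer below $\min I$ lies in $J$, giving a left tail $(-\infty,j']\cap\Z \subset J$. Symmetrically, since $I \neq \emptyset$, fix $k \in I$; then $j < k + N$ for every $j \in J$, so $J$ is bounded above, and every integer above $\max J$ lies in $I$, giving a right tail $[i',\infty)\cap\Z \subset I$. I expect the only genuine obstacle to be the bookkeeping in the key step, namely aligning the forward products of the first hypothesis with the backward products of the second so that they refer to an identical product; everything else is elementary. Note that the nontriviality of the partition is used precisely to supply the two fixed endpoints $j \in J$ and $k \in I$ that drive the boundedness arguments.
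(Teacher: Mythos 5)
Your proof is correct and is essentially the paper's own argument: the central contradiction (a single product of length $n \geq N$ starting at a point of $I$ and ending just before a point of $J$ would be simultaneously $>1$ and $<1$) is exactly the paper's key step, stated there in the contrapositive form $k \in I \Rightarrow k+n \in I$ and $k \in J \Rightarrow k-n \in J$. Your concluding boundedness argument is just a minor repackaging of the same final step.
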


\begin{proof}
By hypothesis, there exists $n_0 \in \N$ such that
$$
\varphi(k) \cdot\ldots\cdot \varphi(k+n-1) > 1 \ \ \text{ for all } k \in I
$$
and
$$
\varphi(k-n) \cdot\ldots\cdot \varphi(k-1) < 1 \ \ \text{ for all } k \in J,
$$
whenever $n \geq n_0$. We claim that
\begin{equation}
k \in I \ \Rightarrow \ k+n \in I \ \text{ for all } n \geq n_0. \label{eq1}
\end{equation}
Indeed, suppose that $k \in I$ but $k+n \in J$ for a certain $n \geq n_0$.
Then,
$$
\varphi(k) \cdot\ldots\cdot \varphi(k+n-1) =
\varphi((k+n)-n) \cdot\ldots\cdot \varphi((k+n)-1)
$$
is simultaneously $> 1$ and $< 1$, because $k \in I$, $k+n \in J$ and
$n \geq n_0$. This contradiction proves (\ref{eq1}).
Analogously, we have that
\begin{equation}
k \in J \ \Rightarrow \ k-n \in J \ \text{ for all } n \geq n_0. \label{eq2}
\end{equation}
Since $I \neq \emptyset$ and $J \neq \emptyset$, it is clear that
(\ref{eq1}) and (\ref{eq2}) imply the existence of $i,j \in \Z$ with
the desired properties.
\end{proof}

\medskip
\noindent {\bf Theorem B.}
{\it Let $X = \ell_p(\Z)$ $(1 \leq p < \infty)$ or $X = c_0(\Z)$, and consider
a weight sequence $w = (w_n)_{n \in \Z}$ with $\inf_{n \in \Z} |w_n| > 0$.
\begin{enumerate}
\item [\rm (a)] The following assertions are equivalent:
      \begin{enumerate}
      \item [\rm (i)]   $F_w : X \to X$ is expansive;
      \item [\rm (ii)]  $F_w : X \to X$ or $F_w^{-1} : X \to X$ is positively
            expansive;
      \item [\rm (iii)] $\displaystyle
            \sup_{n \in \N} |w_1 \cdot\ldots\cdot w_n| = \infty$ or
            $\displaystyle
            \sup_{n \in \N} |w_{-n} \cdot\ldots\cdot w_{-1}|^{-1} = \infty$.
      \end{enumerate}
\item [\rm (b)] The following assertions are equivalent:
      \begin{enumerate}
      \item [\rm (i)]   $F_w : X \to X$ is uniformly expansive;
      \item [\rm (ii)] One of the following conditions holds:
            \begin{itemize}
            \item $\displaystyle \lim_{n \to \infty} \big(\inf_{k \in \Z}
                  |w_k \cdot\ldots\cdot w_{k+n-1}|\big) = \infty$, or
            \item $\displaystyle \lim_{n \to \infty} \big(\inf_{k \in \Z}
                  |w_{k-n} \cdot\ldots\cdot w_{k-1}|^{-1}\big) = \infty$, or
            \item $\displaystyle \lim_{n \to \infty} \big(\inf_{k \in \N}
                  |w_k \cdot\ldots\cdot w_{k+n-1}|\big) = \infty$ and
                  $\displaystyle \lim_{n \to \infty} \big(\inf_{k \in -\N}
                  |w_{k-n} \cdot\ldots\cdot w_{k-1}|^{-1}\big) = \infty$.
            \end{itemize}
      \end{enumerate}
\end{enumerate}}

\smallskip
\begin{proof}
Let $e_j$, $j \in \Z$, denote the canonical unit vectors in $X$.

\smallskip
\noindent (a): If $F_w$ is expansive, then
Proposition~\ref{Characterizations}(c) implies that
$$
\sup_{n \in \N} \|F_w^n(e_1)\| = \infty \ \ \ \text{ or } \ \ \
\sup_{n \in \N} \|F_w^{-n}(e_1)\| = \infty.
$$
The first equality means that $\sup_{n \in \N} |w_1 \cdot\ldots\cdot
w_n| = \infty$, whereas the second one means that $\sup_{n \in \N}
|w_{-n+1} \cdot\ldots\cdot w_0|^{-1} = \infty$, which is clearly
equivalent to $\sup_{n \in \N} |w_{-n} \cdot\ldots\cdot w_{-1}|^{-1}
= \infty$. This shows that (i) implies (iii).
Now, assume that $\sup_{n \in \N} |w_1 \cdot\ldots\cdot w_n| = \infty$.
Let $x = (x_j)_{j \in \Z}$ be any nonzero vector in $X$ and choose
$k \in \Z$ such that $x_k \neq 0$. Then,
\begin{align*}
\sup_{n \in \N} \|F_w^n(x)\|
  &\geq \sup_{n \in \N} \big|(w_k \cdot\ldots\cdot w_{k+n-1}) x_k\big|\\
  &= \frac{|x_k| \prod_{j=k}^0 |w_j|}{\prod_{j=1}^{k-1} |w_j|}
    \sup_{n \in \N} |w_1 \cdot\ldots\cdot w_{k+n-1}| = \infty,
\end{align*}
where a product over an empty set of indices has value $1$, by definition.
Hence, by Proposition~\ref{Characterizations}(a), $F_w$ is positively
expansive. Analogously, the relation
$\sup_{n \in \N} |w_{-n} \cdot\ldots\cdot w_{-1}|^{-1} = \infty$
implies that $F_w^{-1}$ is positively expansive.
Thus, (iii) implies (ii). Finally, it is trivial that (ii) implies (i).

\smallskip
\noindent (b): Suppose that $F_w$ is uniformly expansive.
By Proposition~\ref{Characterizations}(d), there is a partition $\{A,B\}$
of $S_X$ such that
$$
\lim_{n \to \infty} c_n = \infty \ \ \ \text{ and } \ \ \
\lim_{n \to \infty} d_n = \infty,
$$
where
$$
c_n = \inf_{x \in A} \|F_w^n(x)\| \ \ \ \text{ and } \ \ \
d_n = \inf_{x \in B} \|F_w^{-n}(x)\| \ \ \ \ (n \in \N).
$$
We remark that an infimum over an empty set of indices has value $\infty$,
by definition. Let
$$
I = \{k \in \Z : e_k \in A\} \ \ \ \text{ and } \ \ \
J = \{k \in \Z : e_k \in B\}.
$$
Then $\{I,J\}$ is a partition of $\Z$. Since, for all $n \in \N$,
$$
\inf_{k \in I} |w_k \cdot\ldots\cdot w_{k+n-1}|
  = \inf_{k \in I} \|F_w^n(e_k)\| \geq c_n
$$
and
$$
\inf_{k \in J} |w_{k-n} \cdot\ldots\cdot w_{k-1}|^{-1}
  = \inf_{k \in J} \|F_w^{-n}(e_k)\| \geq d_n,
$$
we conclude that
\begin{equation}
\lim_{n \to \infty} \big(\inf_{k \in I}
   |w_k \cdot\ldots\cdot w_{k+n-1}|\big) = \infty \ \ \text{ and } \ \
\lim_{n \to \infty} \big(\inf_{k \in J}
   |w_{k-n} \cdot\ldots\cdot w_{k-1}|^{-1}\big) = \infty. \label{Eq1}
\end{equation}
Thus, $J = \emptyset$ gives the first possibility in (ii) while
$I = \emptyset$ gives the second one.
Assume that $I \neq \emptyset$ and $J \neq \emptyset$.
By Lemma~\ref{Lemma1}, there exist $i,j \in \Z$ such that
\begin{equation}
(-\infty,j] \cap \Z \subset J \ \ \ \text{ and } \ \ \
[i,\infty) \cap \Z \subset I. \label{Eq2}
\end{equation}
Since $w_k \neq 0$ for all $k \in \Z$, it is easy to see that (\ref{Eq1})
and (\ref{Eq2}) imply the third possibility in~(ii).

Conversely, assume that (ii) holds.
Let $I = \Z$ and $J = \emptyset$, or $I = \emptyset$ and $J = \Z$, or
$I = \N$ and $J = -\N_0$, depending on whether the first, the second,
or the third possibility in (ii) holds, respectively. Then, in any case,
(\ref{Eq1}) holds. Let $n \in \N$ be such that
$$
\inf_{k \in I} |w_k \cdot\ldots\cdot w_{k+n-1}| \geq 4 \ \ \text{ and } \ \
\inf_{k \in J} |w_{k-n} \cdot\ldots\cdot w_{k-1}|^{-1} \geq 4.
$$
Given $x = (x_k)_{k \in \Z} \in S_X$, we can write $x = a + b$ where
$a = (a_k)_{k \in \Z}$ and $b = (b_k)_{k \in \Z}$ satisfy
$a_k = 0$ whenever $k \in J$ and $b_k = 0$ whenever $k \in I$.
Since $1 = \|x\| \leq \|a\| + \|b\|$, we have that $\|a\| \geq \frac{1}{2}$
or $\|b\| \geq \frac{1}{2}$. If $\|a\| \geq \frac{1}{2}$ then
$$
\|F_w^n(x)\| \geq \|F_w^n(a)\|
  = \big\| \big((w_k \cdot\ldots\cdot w_{k+n-1}) a_k\big)_{k \in \Z} \big\|
  \geq 4\|a\| \geq 2,
$$
and if $\|b\| \geq \frac{1}{2}$ then
$$
\|F_w^{-n}(x)\| \geq \|F_w^{-n}(b)\|
  = \big\| \big((w_{k-n} \cdot\ldots\cdot w_{k-1})^{-1} b_k\big)_{k \in \Z} \big\|
  \geq 4\|b\| \geq 2.
$$
Hence, by definition, $F_w$ is uniformly expansive.
\end{proof}

By using analogous (but simpler) arguments, we can establish the following
characterizations of (uniform) positive expansivity for weighted forward
shifts.

\begin{proposition}\label{BWFS}
Let $A = \N$ or $A = \Z$, let $X = \ell_p(A)$ $(1 \leq p < \infty)$ or
$X = c_0(A)$, and consider a weight sequence $w = (w_n)_{n \in A}$.
\begin{enumerate}
\item [\rm (a)] The following assertions are equivalent:
      \begin{enumerate}
      \item [\rm (i)]  $F_w : X \to X$ is positively expansive;
      \item [\rm (ii)] $\displaystyle \sup_{n\in\N} |w_1 \cdot\ldots\cdot w_n|
            = \infty$ and $w_j \neq 0$ for all $j \in A $.
      \end{enumerate}
\item [\rm (b)] The following assertions are equivalent:
      \begin{enumerate}
      \item [\rm (i)]   $F_w : X \to X$ is uniformly positively expansive;
      \item [\rm (ii)]  $\displaystyle \sup_{n \in \N} \big(\inf_{k \in A}
            |w_k \cdot\ldots\cdot w_{k+n-1}|\big) = \infty$;
      \item [\rm (iii)] $\displaystyle \lim_{n \to \infty} \big(\inf_{k \in A}
            |w_k \cdot\ldots\cdot w_{k+n-1}|\big) = \infty$.
      \end{enumerate}
\end{enumerate}
\end{proposition}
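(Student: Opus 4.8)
The plan is to reduce both parts to the orbit-growth characterizations of Proposition~\ref{Characterizations}(a) and (b), evaluated on the canonical unit vectors $e_j$, for which $F_w^n(e_k) = (w_k \cdot\ldots\cdot w_{k+n-1})\, e_{k+n}$ and hence $\|F_w^n(e_k)\| = |w_k \cdot\ldots\cdot w_{k+n-1}|$. The arguments parallel those for Theorem~B but are genuinely lighter: because only forward iterates are involved, there is no stable/unstable partition of $S_X$ to build, so neither Lemma~\ref{Lemma1} nor the decomposition $x = a+b$ is needed. Note also that we never use invertibility of $F_w$, which is consistent with the fact that (unlike in Theorem~B) no hypothesis $\inf_n |w_n| > 0$ is imposed here.

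For part (a) I would first prove (i)$\Rightarrow$(ii). If some $w_j = 0$, then $F_w^n(e_j) = 0$ for every $n \geq 1$, so $\sup_{n \in \N}\|F_w^n(e_j)\| = 0$; by Proposition~\ref{Characterizations}(a) this already rules out positive expansivity. Hence every $w_j \neq 0$, and testing the same criterion on $e_1$ gives $\sup_{n}|w_1 \cdot\ldots\cdot w_n| = \sup_n \|F_w^n(e_1)\| = \infty$. For (ii)$\Rightarrow$(i), I would take an arbitrary nonzero $x = (x_k)$ and an index $k$ with $x_k \neq 0$, bound $\|F_w^n(x)\|$ from below by its single coordinate in position $k+n$, and factor out the nonzero constant $\frac{\prod_{j=k}^{0}|w_j|}{\prod_{j=1}^{k-1}|w_j|}$ (empty products equal to $1$), reducing $\sup_n|w_k \cdot\ldots\cdot w_{k+n-1}|$ to $\sup_n|w_1 \cdot\ldots\cdot w_{k+n-1}| = \infty$ --- exactly the computation displayed in the proof of Theorem~B(a). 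Proposition~\ref{Characterizations}(a) then yields positive expansivity.

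For part (b) I would run the cycle (i)$\Rightarrow$(iii)$\Rightarrow$(ii)$\Rightarrow$(i). Writing $q_n = \inf_{k \in A}|w_k \cdot\ldots\cdot w_{k+n-1}|$, the implication (iii)$\Rightarrow$(ii) is immediate. For (ii)$\Rightarrow$(i), choose $n$ with $q_n \geq 2$; then $\|F_w^n(x)\| \geq q_n \geq 2$ for every $x \in S_X$ --- in the $\ell_p$ case by pulling $q_n$ out of $\sum_k |w_k \cdot\ldots\cdot w_{k+n-1}|^p |x_k|^p$, and in the $c_0$ case by evaluating at an index where $|x_k| = \|x\|_\infty = 1$ --- which is precisely the defining property of uniform positive expansivity (Definition~\ref{Def2}). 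For (i)$\Rightarrow$(iii), I would invoke Proposition~\ref{Characterizations}(b): uniform positive expansivity means $\|F_w^n(x)\| \to \infty$ uniformly on $S_X$; since each $e_k \in S_X$, we get $\inf_{x \in S_X}\|F_w^n(x)\| \leq q_n$, and as the left-hand side tends to $\infty$, so does $q_n$.

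The two points where I expect the only subtlety to sit are the following. First, part (b) deliberately omits the hypothesis ``$w_j \neq 0$'' that appears in (a); I would remark that it is automatic, since a single vanishing weight forces $q_n = 0$ for all $n \geq 1$ (some window of length $n$ contains that index), so (ii) and (iii) both fail on their own. Second, the natural temptation in (i)$\Rightarrow$(iii) is to extract from the definition a single exponent $n$ with $q_n \geq 2$ and then promote it to $q_n \to \infty$ via a supermultiplicativity estimate $q_{m+n} \geq q_m q_n$ (valid since weights are bounded); routing the implication through Proposition~\ref{Characterizations}(b) instead sidesteps this completely and keeps the proof short. Everything else is routine verification.
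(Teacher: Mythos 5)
Your proof is correct and is essentially the argument the paper intends: Proposition~\ref{BWFS} is stated there without proof, with the remark that it follows ``by using analogous (but simpler) arguments'' to Theorem~B, and your reduction to Proposition~\ref{Characterizations}(a),(b) tested on the vectors $e_k$ (with $\|F_w^n(e_k)\| = |w_k\cdots w_{k+n-1}|$), together with the factorization through $|w_1\cdots w_{k+n-1}|$ for general nonzero $x$, is exactly that simplification. The only nitpick is the parenthetical claim that $q_{m+n}\ge q_m q_n$ is ``valid since weights are bounded'' --- supermultiplicativity of these infima holds with no boundedness assumption --- but since you explicitly route around that step, it does not affect the proof.
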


It is clear that a unilateral weighted backward shift cannot be positively
expansive, but for bilateral weighted backward shifts we have the following
characterizations.

\begin{proposition}\label{BWBS}
Let $X = \ell_p(\Z)$ $(1 \leq p < \infty)$ or $X = c_0(\Z)$, and consider
a weight sequence $w = (w_n)_{n \in \Z}$.
\begin{enumerate}
\item [\rm (a)] The following assertions are equivalent:
      \begin{enumerate}
      \item [\rm (i)]  $B_w : X \to X$ is positively expansive;
      \item [\rm (ii)] $\displaystyle
            \sup_{n \in \N} |w_{-n} \cdot\ldots\cdot w_{-1}| = \infty$
            and $w_j \neq 0$ for all $j \geq 0$.
      \end{enumerate}
\item [\rm (b)] The following assertions are equivalent:
      \begin{enumerate}
      \item [\rm (i)]   $B_w : X \to X$ is uniformly positively expansive;
      \item [\rm (ii)]  $\displaystyle \sup_{n \in \N} \big(\inf_{k \in \Z}
            |w_{k-n+1} \cdot\ldots\cdot w_k|\big) = \infty$;
      \item [\rm (iii)] $\displaystyle \lim_{n \to \infty} \big(\inf_{k \in \Z}
            |w_{k-n+1} \cdot\ldots\cdot w_k|\big) = \infty$.
      \end{enumerate}
\end{enumerate}
\end{proposition}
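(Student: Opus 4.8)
My plan is to deduce Proposition~\ref{BWBS} from Proposition~\ref{BWFS} by conjugating the bilateral backward shift with the index-reversal isometry, rather than repeating the window arguments from scratch. Define $J : X \to X$ by $(Jx)_n = x_{-n}$, where $X = \ell_p(\Z)$ $(1 \leq p < \infty)$ or $X = c_0(\Z)$. Then $J$ is an isometric involution of $X$, and a direct computation of the components gives
$$
J B_w J = F_v, \qquad \text{where } v_m = w_{-m} \ (m \in \Z).
$$
Since $v$ is bounded, $F_v$ is a genuine weighted forward shift on $X$. Because $J$ is an isometric isomorphism with $J = J^{-1}$, we have $B_w^n = J F_v^n J$, hence $\|B_w^n z\| = \|F_v^n(Jz)\|$ for every $z$, while $z \mapsto Jz$ maps $S_X$ bijectively onto itself; therefore $B_w$ is positively expansive (respectively uniformly positively expansive) if and only if $F_v$ is. Thus both parts reduce to Proposition~\ref{BWFS} applied to $F_v$ with $A = \Z$, once the conditions on $v$ are rewritten in terms of $w$.

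For part (a), Proposition~\ref{BWFS}(a) says that $F_v$ is positively expansive if and only if $\sup_{n \in \N} |v_1 \cdots v_n| = \infty$ and $v_j \neq 0$ for all $j \in \Z$. Here $v_1 \cdots v_n = w_{-1} w_{-2} \cdots w_{-n}$, so $|v_1 \cdots v_n| = |w_{-n} \cdots w_{-1}|$, and the supremum condition becomes $\sup_{n \in \N} |w_{-n} \cdots w_{-1}| = \infty$, while $v_j \neq 0$ for all $j$ is the same as $w_j \neq 0$ for all $j$. The fact that the stated condition only requires $w_j \neq 0$ for $j \geq 0$ is then explained by observing that a single vanishing $w_{-j_0}$ with $j_0 \geq 1$ would make every product $w_{-n} \cdots w_{-1}$ with $n \geq j_0$ equal to $0$; so the supremum condition already forces $w_j \neq 0$ for all $j \leq -1$. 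This yields the equivalence in (a).

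For part (b), Proposition~\ref{BWFS}(b) characterizes uniform positive expansivity of $F_v$ by the equivalent conditions $\sup_{n \in \N}\big(\inf_{k \in \Z} |v_k \cdots v_{k+n-1}|\big) = \infty$ and $\lim_{n \to \infty}\big(\inf_{k \in \Z} |v_k \cdots v_{k+n-1}|\big) = \infty$. Since $v_k \cdots v_{k+n-1} = w_{-k} w_{-k-1} \cdots w_{-k-n+1}$, we get $|v_k \cdots v_{k+n-1}| = |w_{-k-n+1} \cdots w_{-k}|$, and replacing $k$ by $-k$ in the infimum (a bijection of $\Z$) gives $\inf_{k \in \Z} |v_k \cdots v_{k+n-1}| = \inf_{k \in \Z} |w_{k-n+1} \cdots w_k|$. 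This is exactly the quantity appearing in (b)(ii) and (b)(iii), so the two equivalent conditions for $F_v$ translate verbatim into conditions (ii) and (iii) for $B_w$, completing part (b).

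The only delicate point is the index bookkeeping: one must check carefully that the reversal $v_m = w_{-m}$ turns the forward products $v_k \cdots v_{k+n-1}$ into precisely the backward windows $w_{k-n+1} \cdots w_k$, and that all three conditions in (b) line up after the substitution $k \mapsto -k$. The nontrivial analytic content — in particular the equivalence of the $\sup$ and $\lim$ formulations in (b), which in Proposition~\ref{BWFS} rests on the supermultiplicativity of $n \mapsto \inf_{k} |w_{k-n+1} \cdots w_k|$ together with the boundedness of $w$ — is inherited from Proposition~\ref{BWFS} and need not be redone. Should one prefer a self-contained argument, the same statements follow directly by computing $B_w^m e_k = (w_k \cdots w_{k-m+1})\, e_{k-m}$, noting that $\|B_w^m x\| \geq \inf_{k}|w_{k-m+1} \cdots w_k|$ for every $x \in S_X$, and then invoking Proposition~\ref{Characterizations}(a),(b) with supermultiplicativity exactly as in the proof of Theorem~B; but the conjugacy route above is considerably shorter.
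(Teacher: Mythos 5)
Your proof is correct, and it takes a genuinely different route from the paper. The paper gives no explicit proof of Proposition~\ref{BWBS}: it is presented as following from ``analogous (but simpler) arguments'' to those in Theorem~B, i.e., one is expected to redo the direct computation with the basis vectors $e_k$, the window products, and Proposition~\ref{Characterizations}(a),(b). You instead conjugate by the coordinate-reversal involution $J$, and your bookkeeping checks out: $(JB_wJx)_n = w_{-n+1}x_{n-1}$ indeed equals $(F_vx)_n$ with $v_m = w_{-m}$, the products $v_1\cdots v_n$ and $v_k\cdots v_{k+n-1}$ translate into exactly the backward windows appearing in (a)(ii) and (b)(ii)--(iii), and the observation that the supremum condition already forces $w_j\neq 0$ for $j\leq -1$ correctly reconciles the ``$j\geq 0$'' in the statement with the ``all $j\in A$'' of Proposition~\ref{BWFS}(a). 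Since $J$ is an isometric involution of both $\ell_p(\Z)$ and $c_0(\Z)$ mapping $S_X$ onto itself, $\|B_w^nz\|=\|F_v^n(Jz)\|$ and all the expansivity notions transfer, so both parts do reduce to Proposition~\ref{BWFS} with $A=\Z$. What your approach buys is economy and a clear explanation of \emph{why} the backward-shift statement mirrors the forward-shift one (it is literally the same operator up to an isometric conjugacy); what it costs is that it is not self-contained --- it inherits whatever proof one accepts for Proposition~\ref{BWFS}, which the paper itself also leaves to the reader --- whereas the paper's implied direct argument would establish both propositions from scratch by the same template as Theorem~B. Your closing sketch of the self-contained alternative (via $B_w^me_k=(w_k\cdots w_{k-m+1})e_{k-m}$ and supermultiplicativity of $n\mapsto\inf_k|w_{k-n+1}\cdots w_k|$) is the paper's intended route and is also correct.
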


\begin{remark}\label{PE-E}
If $T$ is an invertible operator on a Banach space $X$, it is clear that
$$
T \text{ or } T^{-1} \text{ positively expansive } \ \Rightarrow \
T \text{ expansive}.
$$
We saw in Theorem~B(a) that the converse holds for the operators
$F_w$ on the spaces $\ell_p(\Z)$ ($1 \leq p < \infty$) or $c_0(\Z)$.
Of course, the converse is not true in general. For instance, if $T$ is any
invertible hyperbolic operator with nontrivial hyperbolic splitting, then
$T$ is uniformly expansive, but neither $T$ nor $T^{-1}$ is positively
expansive.
\end{remark}

\begin{remark}
(a) It follows from the equivalence (ii) $\Leftrightarrow$ (iii) in
Proposition~\ref{BWFS}(b) that all limits in Theorem~B(b)(ii)
can be replaced by the supremum over $n \in \N$.

\smallskip
\noindent (b)
The first possibility in Theorem~B(b)(ii) means that $F_w$ is
uniformly positively expansive (by Proposition~\ref{BWFS}(b)), whereas the
second one means that $F_w^{-1}$ is uniformly positively expansive
(by  Proposition~\ref{BWBS}(b)).
However, the third possibility can indeed happen, as can be seen by
choosing $w = (\ldots,\frac{1}{2},\frac{1}{2},\frac{1}{2},2,2,2,\ldots)$.
This shows that $F_w$ can be uniformly expansive without $F_w$ or $F_w^{-1}$
being uniformly positively expansive, in contrast to what happens in the case
of expansivity (see Theorem~B(a)).
\end{remark}

Let us now see an example of an invertible operator on the Hilbert space
$\ell_2(\Z)$ which is positively expansive and hypercyclic.

\begin{example}\label{PEHC}
Fix a real number $t > 1$ and consider the weight sequence
$w = (w_n)_{n \in \Z}$ given by
$$
w_n = t \ \ \text{ for all } n \geq 0
$$
and
$$
(w_{-1},w_{-2},w_{-3},\ldots) = (t,\frac{1}{t},\frac{1}{t},t,t,t,t,
\frac{1}{t},\ldots,\frac{1}{t},t,\ldots,t,\ldots),
$$
where the successive blocks of $t$'s and $\frac{1}{t}$'s have lengths
$2^0,2^1,2^2,\ldots$. Let
$$
m_k = 2^0 + 2^1 + \cdots + 2^{2k-1} \ \ \text{ and } \ \
n_k = 2^0 + 2^1 + \cdots + 2^{2k} \ \ \ (k \in \N).
$$
A simple induction argument shows that
$$
w_{-m_k} \cdot\ldots\cdot w_{-1} \leq \frac{1}{t^k} \ \ \text{ and } \ \
w_{-n_k} \cdot\ldots\cdot w_{-1} \geq t^k \ \ \text{ for all } k \in \N.
$$
In particular, $\sup_{n \in \N} (w_{-n} \cdot\ldots\cdot w_{-1}) = \infty$.
Hence, by Proposition~\ref{BWBS}(a), the bilateral weighted backward shift
$$
B_w : \ell_2(\Z) \to \ell_2(\Z)
$$
is positively expansive. Since $\inf_{n \in \Z} w_n > 0$, $B_w$ is invertible.
Hence, $B_w$ is also expansive. By \cite[Corollary 1.39]{FBayEMat09},
$B_w$ is hypercyclic if and only if, for any $q \in \N$,
$$
\liminf_{n \to \infty} \max\big\{(w_1 \cdot \ldots \cdot w_{n+q})^{-1},
(w_0 \cdot \ldots \cdot w_{-n+q+1})\big\} = 0.
$$
But this condition follows from the fact that
$$
\max\big\{(w_1 \cdot \ldots \cdot w_{(m_k+q+1)+q})^{-1},
          (w_0 \cdot \ldots \cdot w_{-(m_k+q+1)+q+1})\big\}
  \leq \frac{1}{t^{k-1}} \ \ \text{ for all } k \in \N.
$$
Thus, the operator $B_w$ is also hypercyclic.
\end{example}

\begin{remark}\label{HyperbolicShifts}
(a) Let $X = \ell_p(\Z)$ $(1 \leq p < \infty)$ or $X = c_0(\Z)$, and consider
a weight sequence $w = (w_n)_{n \in \Z}$ with $\inf_{n \in \Z} |w_n| > 0$.
It is known (see \cite{pos}) that the spectrum of the invertible bilateral
weighted forward shift $F_w : X \to X$ is the annulus
$\{\lambda \in \C : \frac{1}{r(F_w^{-1})} \leq |\lambda| \leq r(F_w)\}$,
where $r(S) = \lim_{n \to \infty} \|S^n\|^\frac{1}{n}$
denotes the spectral radius of the operator $S : X \to X$.
Since
$$
\|F_w^n\| = \sup_{k \in \Z} |w_k \cdot\ldots\cdot w_{k+n-1}|
\ \ \text{ and } \ \
\|F_w^{-n}\| = \sup_{k \in \Z} |w_k \cdot\ldots\cdot w_{k+n-1}|^{-1},
$$
we deduce that the following assertions are equivalent:
\begin{enumerate}
\item [\rm (i)]   $F_w$ is hyperbolic;
\item [\rm (ii)]  $\sigma(F_w) \subset \D$ or $\sigma(F_w^{-1}) \subset \D$;
\item [\rm (iii)] $\displaystyle \lim_{n \to \infty} \sup_{k \in \Z}
                  |w_k \cdot \ldots \cdot w_{k+n-1}|^{\frac{1}{n}} < 1$ or
                  $\displaystyle \lim_{n \to \infty} \sup_{k \in \Z}
                  |w_k \cdot \ldots \cdot w_{k+n-1}|^{-\frac{1}{n}} < 1$.
\end{enumerate}

\noindent (b) Let $A = \N$ or $A = \Z$,
let $X = \ell_p(A)$ $(1 \leq p < \infty)$ or $X = c_0(A)$,
and consider a weight sequence $w = (w_n)_{n \in A}$.
Let $T$ be either the weighted forward shift $F_w : X \to X$ or the
weighted backward shift $B_w : X \to X$. Assume that $T$ is not invertible
(this is automatically the case if $A = \N$).
Since $\sigma(T)$ is equal to the disc
$\{\lambda \in \C :  |\lambda| \leq r(T)\}$ (see \cite{pos}),
we deduce that the following assertions are equivalent:
\begin{enumerate}
\item [\rm (i)]   $T$ is hyperbolic;
\item [\rm (ii)]  $\sigma(T) \subset \D$;
\item [\rm (iii)] $\displaystyle \lim_{n \to \infty} \sup_{k \in A}
                  |w_k \cdot \ldots \cdot w_{k+n-1}|^{\frac{1}{n}} < 1$.
\end{enumerate}
\end{remark}

\begin{remark}
It follows immediately from the definitions that an invertible operator $T$
is expansive (uniformly expansive, hyperbolic) if and only if so is its
inverse operator $T^{-1}$. Hence, the study of these notions for invertible
bilateral weighted backward shifts can be reduced to the corresponding case
of forward shifts (see Theorem~B and Remark~\ref{HyperbolicShifts}(a)).
\end{remark}

\begin{remark}
(a) As mentioned before, it was proved in \cite{MEisJHed70} that every
invertible hyperbolic operator is uniformly expansive. Examples of
uniformly expansive nonhyperbolic operators were also obtained in
\cite{MEisJHed70}. We observe that such examples can be easily obtained
by using the characterizations given in Theorem~B(b) and
Remark~\ref{HyperbolicShifts}(a).

\smallskip
\noindent (b) In the case of noninvertible operators, we observe that
there is no relation between hyperbolicity and uniform positive expansivity
in general. For instance, it follows from Proposition~\ref{BWFS}(a) and
Remark~\ref{HyperbolicShifts}(b) that in the class of unilateral weighted
forward shifts on $\ell_p(\N)$ ($1 \leq p < \infty$) or on $c_0(\N)$,
the set of hyperbolic shifts is disjoint from the set of positively
expansive shifts.
\end{remark}

Let us now see a concrete example of an invertible operator on the Hilbert
space $\ell_2(\Z)$ which is uniformly positively expansive and supercyclic.

\begin{example}\label{UPESC}
Fix real numbers $\alpha > \beta > 1$ and consider the weight sequence
$$
w = (w_n)_{n \in \Z} = (\ldots,\beta,\beta,\beta,\alpha,\alpha,\alpha,\ldots),
$$
where the first $\alpha$ appears at position $1$. Consider the bilateral
weighted backward shift
$$
B_w : \ell_2(\Z) \to \ell_2(\Z).
$$
Since $\|B_w(x)\| \geq \beta \|x\|$ for all $x \in \ell_2(\Z)$,
we have that $B_w$ is uniformly positively expansive.
Since $B_w$ is invertible, $B_w$ is also uniformly expansive.
By \cite[Corollary 1]{FBayEMat09}, $B_w$ is supercyclic if and only if,
for any $q \in \N$,
$$
\liminf_{n \to \infty} \frac{w_0 \cdot\ldots\cdot w_{-n+q+1}}
                            {w_1 \cdot\ldots\cdot w_{n+q}}    = 0.
$$
But, by our choice of $w$,
$$
\liminf_{n \to \infty} \frac{w_0 \cdot\ldots\cdot w_{-n+q+1}}
                            {w_1 \cdot\ldots\cdot w_{n+q}}
  = \lim_{n \to \infty} \frac{\beta^{n-q}}{\alpha^{n+q}}
  = \frac{1}{\alpha^q \beta^q}
    \lim_{n \to \infty} \big( \frac{\beta}{\alpha}\big)^n
  = 0.
$$
Thus, the operator $B_w$ is also supercyclic.
\end{example}


\section{Expansivity and spectrum}

\shs We denote by $\D$ the open unit disc in the complex plane $\C$.
Moreover, $\rho(T)$ denotes the {\em resolvent set} of the operator $T$.
In the case $T$ is an operator on a real Banach space, we define
$$
\rho(T) = \rho(T_\C), \ \ \ \sigma_p(T) = \sigma_p(T_\C) \ \ \text{ and } \ \
\sigma_a(T) = \sigma_a(T_\C).
$$

It is known that if $T$ is a self-adjoint operator on a
complex Hilbert space $H$ and $x \in H$, then there exists a unique positive
Radon measure $\mu$ on $\sigma(T)$ such that
$$
\langle f(T)x,x \rangle = \int_{\sigma(T)} f(t) d\mu(t) \ \
  \text{ for all } f \in C(\sigma(T)).
$$
In particular, $\mu(\sigma(T)) = \|x\|^2$. The measure $\mu$ is called the
{\em spectral measure} associated to $T$ and $x$.

\smallskip
We refer the reader to the books \cite{HDow78} and \cite{Y} for more
informations concerning spectrum.

\medskip
\noindent {\bf Theorem C.}
{\it If $T$ is an operator on a Banach space $X$, then
$$
T \text{ uniformly positively expansive} \ \ \Rightarrow \ \
\sigma_a(T) \cap \ov{\D} = \emptyset.
$$
Moreover, the converse holds if $\rho(T) \cap \D \neq \emptyset$.
In particular, the converse holds if $T$ is invertible.}

\medskip
\begin{proof}
It is enough to consider the case of complex scalars.
Suppose that there is a point $\lambda \in \sigma_a(T) \cap
\ov{\D}$. Let $(x_k)_{k \in \N}$ be a sequence in $S_X$ such that
$\lim_{k \to \infty} \|\lambda x_k - Tx_k\| = 0$. Since
$$
\|\lambda^n x_k - T^nx_k\| \leq |\lambda| \|\lambda^{n-1} x_k -
T^{n-1} x_k\|
  + \|T^{n-1}\| \|\lambda x_k - Tx_k\|,
$$
it follows by induction that
$$
\lim_{k \to \infty} \|\lambda^n x_k - T^nx_k\| = 0 \ \ \text{ for
all }
  n \in \N.
$$
Since $\|\lambda^n x_k\| \leq 1$ for all $k,n \in \N$, we conclude
from Proposition~\ref{Characterizations}(b) that $T$ is not uniformly
positively expansive.

Now, assume that $\rho(T) \cap \D \neq \emptyset$ and $\sigma_a(T) \cap
\ov{\D} = \emptyset$. Since $\sigma_a(T)$ contains the boundary of
$\sigma(T)$, we must have $\sigma(T) \cap \ov{\D} = \emptyset$.
Hence,
$$
\sigma(T^{-1}) = \{\lambda^{-1} : \lambda \in \sigma(T)\} \subset \D,
$$
that is, $r(T^{-1}) < 1$. Choose $R \in \R$ such that $r(T^{-1}) < R < 1$.
It follows from the spectral radius formula that there exists $n_0 \in \N$
such that
$$
\|T^nx\| \geq R^{-n} \|x\| \ \ \text{ for all } x \in X \text{ and }
n \geq n_0,
$$
which implies that $T$ is uniformly positively expansive.
\end{proof}

Let us now give a short direct proof of the following result from \cite{MMaz00}.

\begin{theorem}\label{NormalExp}
If $T$ is an invertible normal operator on a Hilbert space $H$, then
$T$ is expansive if and only if $\sigma_p(T^*T) \cap \T = \emptyset$.
\end{theorem}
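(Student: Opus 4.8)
The plan is to convert the statement into a question about a scalar spectral measure. Since $T$ is normal, for every $n \in \Z$ one has $(T^n)^*T^n = (T^*T)^n$: this holds for $n \geq 0$ by rearranging factors using $T^*T = TT^*$, and it extends to $n < 0$ because $T^{-1}$ is again normal with $(T^{-1})^*T^{-1} = (T^*T)^{-1}$. Writing $A = T^*T$, a positive invertible self-adjoint operator, this yields the key identity $\|T^nx\|^2 = \langle A^n x, x\rangle$ for all $n \in \Z$ and all $x \in H$. Applying the spectral measure $\mu_x$ associated to $A$ and $x$ (recalled just before the statement), and noting that $0 \notin \sigma(A) \subset (0,\infty)$ so that $t \mapsto t^n$ is continuous on $\sigma(A)$ for every integer $n$, I obtain $\|T^nx\|^2 = \int_{\sigma(A)} t^n \, d\mu_x(t)$. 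By Proposition~\ref{Characterizations}(c), expansivity of $T$ is exactly the assertion that $\sup_{n \in \Z} \int_{\sigma(A)} t^n \, d\mu_x(t) = \infty$ for every nonzero $x$; and since the eigenvalues of $A$ are positive, the hypothesis $\sigma_p(T^*T) \cap \T = \emptyset$ is simply the statement that $1$ is not an eigenvalue of $A$. Thus the theorem reduces to showing that $1 \in \sigma_p(A)$ if and only if some nonzero $x$ has $\sup_{n \in \Z} \int t^n \, d\mu_x < \infty$.

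For the easy direction, if $1 \in \sigma_p(A)$ with eigenvector $x \neq 0$, then $\mu_x = \|x\|^2 \delta_1$, so $\|T^nx\|^2 = \|x\|^2$ for every $n$, the orbit of $x$ is bounded, and $T$ is not expansive.

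For the converse I would argue by contrapositive, starting from a nonzero $x$ with $\sup_{n \in \Z} \int t^n \, d\mu_x < \infty$ and showing that $\mu_x$ is forced to concentrate at $1$. Split $\mu_x$ over the sets $\{t>1\}$, $\{t=1\}$, $\{t<1\}$. As $n \to +\infty$, monotone convergence gives $\int_{\{t>1\}} t^n \, d\mu_x \to \infty$ whenever $\mu_x(\{t>1\})>0$, while the contribution of $\{t<1\}$ tends to $0$ by dominated convergence and that of $\{1\}$ stays constant; finiteness of the supremum thus forces $\mu_x(\{t>1\})=0$. Letting $n \to -\infty$ and using $t^{-m}\to\infty$ on $\{t<1\}$ symmetrically forces $\mu_x(\{t<1\})=0$. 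Hence $\mu_x = \|x\|^2 \delta_1$, and then $\|(A-I)x\|^2 = \langle (A-I)^2 x, x\rangle = \int (t-1)^2 \, d\mu_x = 0$, so $Ax = x$ and $1 \in \sigma_p(A)$.

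The routine part is the spectral bookkeeping; the main points to get right are the extension of $(T^n)^*T^n = (T^*T)^n$ to negative exponents (which is where invertibility and the normality of $T^{-1}$ enter) and the careful monotone/dominated convergence argument that pins the limiting masses, since it is precisely these two one-sided limits that convert boundedness of the two-sided orbit into concentration of $\mu_x$ at the single point $1$.
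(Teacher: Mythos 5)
Your proof is correct and follows essentially the same route as the paper's: both reduce to the scalar spectral measure $\mu$ of $S=T^*T$ at a vector with bounded two-sided orbit, show that boundedness of the moments $\int_{\sigma(S)} t^n\,d\mu$ for all $n\in\Z$ forces $\mu$ to concentrate at $1$, and conclude $Sx=x$ from $\int_{\sigma(S)}(t-1)^2\,d\mu=0$. The only cosmetic differences are that you get the moment bound directly from $\|T^nx\|^2=\langle S^nx,x\rangle$ rather than via Cauchy--Schwarz, and you use monotone convergence where the paper uses the Chebyshev-type estimates $\beta^n\mu(\sigma(S)\cap[\beta,\infty))\le\int_{\sigma(S)} t^n\,d\mu$.
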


\begin{proof}
We may assume complex scalars.
Suppose that $\sigma_p(T^*T) \cap \T \neq \emptyset$ and let $\lambda$ be
a point in this intersection. There exists $x \in H \backslash \{0\}$
such that $T^*Tx = \lambda x$. Hence, for every $n \in \Z$,
$\|T^nx\|^2 = \langle (T^*T)^nx,x \rangle = \lambda^n \|x\|^2$,
implying that $\|T^nx\| = \|x\|$. Thus, $T$ is not expansive.

Conversely, assume that $T$ is not expansive and consider the positive
operator $S = T^*T$. There exists $x \in S_H$ with $\|T^nx\| < 2$
for all $n \in \Z$. Since $T$ is normal,
$$
\|S^nx\| = \|(T^n)^*T^nx\| = \|T^{2n}x\| < 2 \ \ \text{ for all } n \in \Z.
$$
Let $\mu$ be the spectral measure associated to $S$ and $x$. Since $S$
is an invertible positive operator, $\sigma(S) \subset (0,\infty)$.
Thus, by the Cauchy-Schwartz inequality,
$$
0 \leq \int_{\sigma(S)} t^n d\mu(t) = \langle S^nx,x \rangle
  \leq \|S^nx\|\|x\| < 2 \ \ \text{ for all } n \in \Z.
$$
For each $\alpha < 1$ and each $\beta > 1$, let
$A_\alpha = \sigma(S) \cap (0,\alpha]$ and
$B_\beta = \sigma(S) \cap [\beta,\infty)$.
Since
$$
\alpha^{-n} \mu(A_\alpha) \leq \int_{\sigma(S)} t^{-n}d\mu(t) < 2
\ \ \text{ and } \ \
\beta^n \mu(B_\beta) \leq \int_{\sigma(S)} t^n d\mu(t) < 2,
$$
for all $n \in \N$, we conclude that $\mu(A_\alpha) = \mu(B_\beta) = 0$.
This implies that $\sigma(S) \backslash \{1\}$ has $\mu$-measure zero.
Therefore,
$$
\|Sx - x\|^2 = \langle (S-I)^2x,x \rangle = \int_{\sigma(S)} (t-1)^2 d\mu(t)
  = 0,
$$
and so $1 \in \sigma_p(S)$.
\end{proof}

Recall that a Hilbert space operator $T$ is said to be {\em hyponormal} if
$$
\|T^*x\| \leq \|Tx\| \ \ \text{ for all } x.
$$
It is natural to ask if the previous theorem can be generalized to hyponormal
operators. Let us now show that the implication
$$
\sigma_p(T^*T) \cap \T = \emptyset \ \ \Longrightarrow \ \ T \text{ expansive}
$$
holds for hyponormal weighted shifts, but it is not true in general, and that
the converse implication may fail even for hyponormal weighted shifts.

\begin{proposition}\label{HypExpShift}
Let $w = (w_n)_{n \in \Z}$ be a weight sequence with $\inf_{n \in \Z} |w_n|>0$
and consider the bilateral weighted forward shift
$$
F_w : \ell_2(\Z) \to \ell_2(\Z).
$$
Assume that $F_w$ is hyponormal. If $\sigma_p(F_w^*F_w) \cap \T = \emptyset$,
then $F_w$ is expansive.
\end{proposition}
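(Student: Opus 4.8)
The plan is to translate both hypotheses into elementary conditions on the weight sequence and then invoke the characterization of expansivity in Theorem~B(a). I would begin by computing the positive operator $F_w^* F_w$ explicitly. Using the formulas $(F_w x)_n = w_{n-1}x_{n-1}$ and $(F_w^* x)_n = \ov{w_n}\,x_{n+1}$, a direct calculation gives $(F_w^* F_w x)_n = |w_n|^2 x_n$, so $F_w^* F_w$ is the diagonal operator with diagonal entries $|w_n|^2$. For a diagonal operator on $\ell_2(\Z)$ the point spectrum is exactly the set of its diagonal values, whence $\sigma_p(F_w^* F_w) = \{|w_n|^2 : n \in \Z\}$. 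Since $F_w^* F_w$ is positive, intersecting with $\T$ is the same as intersecting with $\{1\}$, so the hypothesis $\sigma_p(F_w^* F_w) \cap \T = \emptyset$ is equivalent to the statement that $|w_n| \neq 1$ for every $n \in \Z$.

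Next I would read off what hyponormality imposes on the weights. Computing norms gives $\|F_w x\|^2 = \sum_n |w_n|^2 |x_n|^2$ and $\|F_w^* x\|^2 = \sum_n |w_{n-1}|^2 |x_n|^2$, so the inequality $\|F_w^* x\| \le \|F_w x\|$ for all $x$ holds if and only if $|w_{n-1}| \le |w_n|$ for all $n$ (necessity by testing on the canonical vectors $e_n$, sufficiency term by term). Thus the assumption that $F_w$ is hyponormal is precisely the statement that the sequence $(|w_n|)_{n \in \Z}$ is nondecreasing.

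With these two reductions in hand, the conclusion follows from a simple dichotomy on $|w_1|$, which is $\neq 1$. If $|w_1| > 1$, then monotonicity yields $|w_j| \ge |w_1| > 1$ for all $j \ge 1$, hence $|w_1 \cdot\ldots\cdot w_n| \ge |w_1|^n \to \infty$, so the first alternative in Theorem~B(a)(iii) holds. If instead $|w_1| < 1$, then monotonicity yields $|w_{-j}| \le |w_{-1}| \le |w_1| < 1$ for all $j \ge 1$, hence $|w_{-n} \cdot\ldots\cdot w_{-1}| \le |w_1|^n \to 0$, so the second alternative in Theorem~B(a)(iii) holds. In either case Theorem~B(a), via the implication (iii)$\Rightarrow$(i), gives that $F_w$ is expansive.

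The argument presents no serious obstacle; the only care needed lies in the two bookkeeping computations, namely identifying $F_w^* F_w$ with a diagonal operator and recognizing hyponormality as monotonicity of $(|w_n|)$. Once these are settled, the essential point is that hyponormality forces the moduli of the weights to be monotone, so the single non-unimodular value $|w_1|$ already decides whether the forward products explode or the backward products vanish, which is exactly the content required by Theorem~B(a). (It is worth noting that, given monotonicity, only the condition $|w_1| \neq 1$ is actually used, a weakening of the full spectral hypothesis.)
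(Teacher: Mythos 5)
Your proposal is correct and follows essentially the same route as the paper: compute $F_w^*F_w$ as the diagonal operator with entries $|w_n|^2$ so the spectral hypothesis becomes $|w_n|\neq 1$ for all $n$, use hyponormality to get monotonicity of $(|w_n|)$, and then split on whether the pivot weight exceeds $1$ to land in one of the two alternatives of Theorem~B(a)(iii). The only cosmetic differences are that you verify the equivalence of hyponormality with monotonicity of the weight moduli (which the paper cites as well known) and pivot on $|w_1|$ rather than $|w_0|$.
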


\begin{proof}
Since $F_w(e_n) = w_n e_{n+1}$ and $F_w^*(e_n) = \ov{w_{n-1}} e_n$,
$$
F_w^*F_w(e_n) = |w_n|^2 e_n \ \ \ (n \in \Z),
$$
which implies that
\begin{equation}
\sigma_p(F_w^*F_w) \cap \T = \emptyset \ \ \Longleftrightarrow \ \
|w_n| \neq 1 \ \text{ for all } n \in \Z. \label{equa}
\end{equation}
Since $F_w$ is hyponormal, it is well-known that the sequence
$(|w_n|)_{n \in \Z}$ is increasing. Therefore,
$\sup_{n \in \N} |w_1 \cdot\ldots\cdot w_n| = \infty$ if $|w_0| > 1$, while
$\sup_{n \in \N} |w_{-n} \cdot\ldots\cdot w_{-1}|^{-1} = \infty$ if $|w_0|<1$.
Anyway, it follows from Theorem~B(a) that $F_w$ is expansive.
\end{proof}

\begin{remark}\label{HypExpRemark}
(a) We cannot remove the hyponormality hypothesis in
Proposition~\ref{HypExpShift}. To see this, it is enough to choose $w$
so that $|w_n| \neq 1$ for all $n \in \Z$,
$\sup_{n \in \N} |w_1 \cdot\ldots\cdot w_n| < \infty$ and
$\sup_{n \in \N} |w_{-n} \cdot\ldots\cdot w_{-1}|^{-1} < \infty$.
Then, $\sigma_p(F_w^*F_w) \cap \T = \emptyset$ (by (\ref{equa})), but
$F_w$ is not expansive (by Theorem~B(a)).

\smallskip
\noindent (b) The converse of the conclusion of Proposition~\ref{HypExpShift}
is not true in general. For instance, if
$w = (\ldots,\frac{1}{2},\frac{1}{2},\frac{1}{2},1,2,2,2,\ldots)$
then $F_w$ is hyponormal and uniformly expansive, but
$\sigma_p(F_w^*F_w) \cap \T \neq \emptyset$.
\end{remark}

\begin{remark}
Let $T$ be a normal operator on a complex Hilbert space $H$.
In view of the previous theorem, it is natural to make the following
question: {\it Is it true that $T$ is positively expansive if and only if
$\sigma_p(T^*T) \cap \ov{\D} = \emptyset$?}

The direct implication is true, since the relation $T^*Tx = \lambda x$,
with $\lambda \in \ov{\D}$ and $x \neq 0$, implies that
$\|T^nx\|^2 = \langle (T^*T)^nx,x \rangle = \lambda^n \|x\|^2 \leq \|x\|^2$
for all $n \in \N$, and so $T$ is not positively expansive.

However, the converse is not true in general, even if $T$ is invertible.
Indeed, let $T : L^2[0,1] \to L^2[0,1]$ be defined by
$$
(Tf)(t) = \frac{t+1}{2}\, f(t) \ \ \text{ for all } f \in L^2[0,1].
$$
It is not difficult to see that $T$ is invertible, self-adjoint,
not positively expansive, and $\sigma_p(T^*T) = \emptyset$.
\end{remark}

Nevertheless, we have the following characterization.

\begin{proposition}\label{NormalPE}
Let $T$ be a normal operator on a complex Hilbert space $H$. Then, $T$ is
positively expansive if and only if $\mu(\sigma(T^*T) \cap (1,\infty)) > 0$
for every spectral measure $\mu$ associated to $T^*T$.
\end{proposition}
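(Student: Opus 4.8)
The plan is to reduce positive expansivity to the growth of the orbits $\|T^n x\|$ via Proposition~\ref{Characterizations}(a), and then translate that growth into a statement about the spectral measures of $S := T^*T$. Since $T$ is normal, $T$ and $T^*$ commute, so $(T^n)^* T^n = (T^*T)^n = S^n$; hence for every $x \in H$,
$$
\|T^n x\|^2 = \langle S^n x, x \rangle = \int_{\sigma(S)} t^n \, d\mu_x(t),
$$
where $\mu_x$ denotes the spectral measure associated to $S$ and $x$. Because $S$ is a positive operator, $\sigma(S) \subset [0,\infty)$, so each $\mu_x$ is a finite positive measure supported on $\sigma(S) \subset [0,\infty)$.

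The heart of the argument is the following elementary claim about a finite positive measure $\mu$ on $\sigma(S)$: one has $\sup_{n \in \N} \int_{\sigma(S)} t^n \, d\mu(t) = \infty$ if and only if $\mu(\sigma(S) \cap (1,\infty)) > 0$. For the ``if'' direction I would write $\sigma(S) \cap (1,\infty)$ as the increasing union of the sets $\sigma(S) \cap [\beta,\infty)$ over $\beta > 1$; by continuity of measure from below there is some $\beta > 1$ with $c := \mu(\sigma(S) \cap [\beta,\infty)) > 0$, and then $\int_{\sigma(S)} t^n \, d\mu \geq \beta^n c \to \infty$. For the ``only if'' direction, if $\mu(\sigma(S) \cap (1,\infty)) = 0$ then $\mu$ is concentrated on $\sigma(S) \cap [0,1]$, where $t^n \leq 1$, whence $\int_{\sigma(S)} t^n \, d\mu \leq \mu(\sigma(S)) < \infty$ for every $n$.

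Combining the claim (applied to $\mu = \mu_x$) with the displayed formula for $\|T^n x\|^2$ yields, for each nonzero $x$,
$$
\sup_{n \in \N} \|T^n x\| = \infty \iff \mu_x(\sigma(S) \cap (1,\infty)) > 0.
$$
By Proposition~\ref{Characterizations}(a), $T$ is positively expansive precisely when the left-hand side holds for every nonzero $x$; since the spectral measures associated to $S = T^*T$ are exactly the measures $\mu_x$ (the nonzero ones corresponding to nonzero vectors), this is precisely the asserted condition that $\mu(\sigma(T^*T) \cap (1,\infty)) > 0$ for every such spectral measure $\mu$. I do not expect a serious obstacle here: the only points needing care are the passage from the open half-line $(1,\infty)$ to a closed ray $[\beta,\infty)$ via continuity of measure, and the (purely bookkeeping) matching of the quantifier ``for every nonzero $x$'' with ``for every spectral measure'', where the vector $x = 0$ must tacitly be excluded.
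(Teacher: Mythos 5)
Your proof is correct and follows essentially the same route as the paper: both arguments rest on the identity $\|T^nx\|^2=\langle (T^*T)^nx,x\rangle=\int_{\sigma(T^*T)}t^n\,d\mu_x(t)$ (valid by normality) together with the observation that $\int t^n\,d\mu$ stays bounded precisely when $\mu$ gives no mass to $(1,\infty)$, combined with Proposition~\ref{Characterizations}(a). The paper merely phrases the two implications contrapositively and bounds $\langle S^nx,x\rangle$ via Cauchy--Schwarz, whereas you state the equivalence for measures directly; the content is the same.
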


\begin{proof}
Let $S = T^*T$. If $T$ is not positively expansive, then there exists
$x \in S_H$ such that $\|T^nx\| < 2$ for all $n \in \N$. By letting $\mu$ be
the spectral measure associated to $S$ and $x$, we obtain
$$
0 \leq \int_{\sigma(S)} t^n d\mu(t) = \langle S^nx,x \rangle < 2
\ \ \text{ for all } n \in \N,
$$
which implies that $\mu(\sigma(S) \cap (1,\infty)) = 0$.

Conversely, suppose that for some $x \neq 0$, the spectral measure $\mu$
associated to $S$ and $x$ satisfies $\mu(\sigma(S) \cap (1,\infty)) = 0$.
Then,
$$
\|T^{2n}x\|^2 = \|S^nx\|^2 = \langle S^{2n}x,x \rangle
  = \int_{\sigma(S)} t^{2n} d\mu(t) \leq \|x\|^2
    \ \ \text{ for all } n \in \N,
$$
implying that $T$ is not positively expansive.
\end{proof}


\section{Shadowing}

\shs Given a metric space $M$ and a homeomorphism $h : M \to M$, recall that
a sequence $(x_n)_{n \in \Z}$ is a {\em $\delta$-pseudotrajectory} of $h$
($\delta > 0$) if
$$
d(h(x_n),x_{n+1}) \leq \delta \ \ \text{ for all } n \in \Z.
$$
The homeomorphism $h$ has the {\em shadowing property} \cite{
RBow75b}
if for every $\eps > 0$ there exists $\delta > 0$ such that every
$\delta$-pseudotrajectory $(x_n)_{n \in \Z}$ of $h$ is {\em $\eps$-shadowed}
by a real trajectory of $h$, that is, there exists $x \in M$ such that
$$
d(x_n,h^n(x)) < \eps \ \ \text{ for all } n \in \Z.
$$
Moreover, the homeomorphism $h$ has the {\em Lipschitz shadowing property} if there exists $K>0$ such that $\delta$ can be choosen satisfying that $\eps< K\delta$. More generally, we call it {\em $\alpha$-H\"older shadowing property}, $ 0 < \alpha \le 1$,  if $\delta$ can be chosen so that $ \eps <  K \delta^{\alpha}$. 
\begin{remark}\label{Shad1}
In the case of operators, it is enough to check the above condition for a
single $\eps > 0$. More precisely, if $T$ is an invertible operator on a
Banach space $X$ and if for some $\eps > 0$ there exists $\delta > 0$
such that every $\delta$-pseudotrajectory of $T$ is $\eps$-shadowed by
a real trajectory of $T$, then $T$ has the shadowing property. It is also true that any linear operator satisfying the shadowing property trivially satisfies the Lipschitz shadowing property.
These facts follows easily from the linearity of $T$.
\end{remark}

Many variations of the notion of shadowing have been introduced and studied
by several authors, e.g., \cite{SPil99,SPil12,WuOprChe16}.
We shall consider here the notions of limit shadowing and $\ell_p$ shadowing.
Let $M$ and $h$ be as above. The homeomorphism $h$ is said to have the
{\em limit shadowing property} if for every sequence $(x_n)_{n \in \Z}$
in $M$ with
$$
\lim_{|n| \to \infty} d(h(x_n),x_{n+1}) = 0,
$$
there exists $x \in M$ such that
$$
\lim_{|n| \to \infty} d(x_n,h^n(x)) = 0.
$$
Moreover, $h$ is said to have the {\em $\ell_p$ shadowing property}
($1 \leq p < \infty$) if for every sequence $(x_n)_{n \in \Z}$ in $M$ with
$$
\sum_{n \in \Z} d(h(x_n),x_{n+1})^p < \infty,
$$
there exists $x \in M$ such that
$$
\sum_{n \in \Z} d(x_n,h^n(x))^p < \infty.
$$

\begin{proposition}\label{Shad2}
Let $T$ be an invertible operator on a Banach space $X$. Suppose that
$$
X = M \oplus N,
$$
where $M$ and $N$ are closed $T$-invariant subspaces of $X$. Then $T$ has
the shadowing property (the limit shadowing property, the $\ell_p$ shadowing
property) if and only if so do $T|_M$ and $T|_N$.
\end{proposition}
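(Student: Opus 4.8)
The plan is to exploit the algebraic decomposition. Since $M$ and $N$ are closed with $X = M \oplus N$, the open mapping theorem guarantees that the associated projections $P : X \to M$ and $Q = I - P : X \to N$ are bounded. Because $M$ and $N$ are $T$-invariant, $P$ and $Q$ commute with $T$; this is the structural fact that makes everything work, since it lets me split trajectories, pseudotrajectories, and their error terms coordinatewise.

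For the ``only if'' direction, suppose $T$ has the shadowing property and take a $\delta$-pseudotrajectory $(u_n)$ of $T|_M$ inside $M$. Since the norm of $M$ is inherited from $X$ and $(T|_M) u_n = T u_n$, this is also a $\delta$-pseudotrajectory of $T$ in $X$, so it is $\eps$-shadowed by the orbit of some $x \in X$. Applying $P$ and using $P T^n = T^n P$ together with $P u_n = u_n$ gives $\|u_n - T^n(Px)\| \le \|P\|\,\eps$ for all $n$, so $Px \in M$ shadows $(u_n)$ up to the fixed factor $\|P\|$. By Remark~\ref{Shad1} (applied to $T|_M$, and noting the trivial case $M = \{0\}$), this is exactly the shadowing property for $T|_M$; the argument for $T|_N$ is identical with $Q$ in place of $P$.

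For the ``if'' direction, assume $T|_M$ and $T|_N$ both have shadowing and let $(x_n)$ be a $\delta$-pseudotrajectory of $T$. Set $u_n = Px_n$ and $v_n = Qx_n$, so that $x_n = u_n + v_n$. From $T u_n - u_{n+1} = P(T x_n - x_{n+1})$ I get that $(u_n)$ is a $(\|P\|\delta)$-pseudotrajectory of $T|_M$, and similarly $(v_n)$ is a $(\|Q\|\delta)$-pseudotrajectory of $T|_N$. Shadowing of the two restrictions produces $u \in M$ and $v \in N$ whose orbits shadow $(u_n)$ and $(v_n)$, and then the triangle inequality shows that the orbit of $x = u + v$ shadows $(x_n)$. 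Choosing $\delta$ small enough relative to $\|P\|,\|Q\|$ and the tolerances supplied by the two restrictions closes this direction.

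The limit shadowing and $\ell_p$ shadowing cases follow the same two-step decomposition verbatim, replacing the $\eps$--$\delta$ bookkeeping by the corresponding convergence or summability statements: the inequalities $\|P(Tx_n - x_{n+1})\| \le \|P\|\,\|Tx_n - x_{n+1}\|$ (and its $Q$-analogue) transport the hypotheses to the summands, while $\|x_n - T^n x\| \le \|u_n - T^n u\| + \|v_n - T^n v\|$ transports the conclusions back, using $\lim_{|n|\to\infty}$ in the limit case and the elementary bound $(a+b)^p \le 2^{p-1}(a^p + b^p)$ in the $\ell_p$ case. The only mild obstacle throughout is keeping the multiplicative constants $\|P\|$ and $\|Q\|$ under control in the plain shadowing statement, and this is dispatched cleanly by Remark~\ref{Shad1}, which frees me from having to match $\eps$ exactly.
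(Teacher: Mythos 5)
Your proposal is correct and follows essentially the same route as the paper: the paper likewise uses the boundedness of the canonical projections (via the open mapping theorem) to pass pseudotrajectories and shadowing orbits back and forth between $X$ and the summands $M$, $N$, absorbing the resulting multiplicative constants exactly as you do. The only cosmetic difference is that the paper phrases the projection bounds as $\|a\|\leq\beta\|x\|$, $\|b\|\leq\beta\|x\|$ rather than naming $P$ and $Q$ explicitly.
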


\begin{proof}
By the open mapping theorem, there is a constant $\beta > 0$ such that
$$
\|a\| \leq \beta \|x\| \ \ \ \ \text{ and } \ \ \ \ \|b\| \leq \beta \|x\|,
$$
whenever $x = a + b$ with $a \in M$ and $b \in N$.
Let $x = a + b$ and $x_n = a_n + b_n$ ($n \in \Z$), where $a,a_n \in M$
and $b,b_n \in N$. The direct implication follows easily from the
inequalities
$$
\|a_n - (T|_M)^n(a)\| \leq \beta \|a_n - T^nx\| \ \ \text{ and } \ \
\|b_n - (T|_N)^n(b)\| \leq \beta \|b_n - T^nx\|,
$$
whereas the inverse implication follows easily from the inequalities
$$
\|(T|_M)(a_n) - a_{n+1}\| \leq \beta \|Tx_n - x_{n+1}\|, \ \
\|(T|_N)(b_n) - b_{n+1}\| \leq \beta \|Tx_n - x_{n+1}\|
$$
and
$$
\|x_n - T^nx\| \leq \|a_n - (T|_M)(a)\| + \|b_n - (T|_N)(b)\|.
$$
\end{proof}

\begin{corollary}\label{Shad3}
If $T$ is an invertible operator on a real Banach space $X$, then
$T$ has the shadowing property (the limit shadowing property, the $\ell_p$
shadowing property) if and only if so does its complexification $T_\C$.
\end{corollary}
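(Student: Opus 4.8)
The plan is to reduce the statement to Proposition~\ref{Shad2} by exhibiting the complexification, viewed as a \emph{real} Banach space, as a direct sum of two $T_\C$-invariant copies of $X$. Recall that as a real vector space $X_\C$ is just $X \times X$, with a typical element written $x + iy$ ($x,y \in X$), and that whatever complexification norm is used on $X_\C$ is equivalent to a product norm on $X \times X$. Under the identification $x + iy \leftrightarrow (x,y)$ the operator $T_\C$ becomes the diagonal map $(u,v) \mapsto (Tu,Tv)$, since $T_\C(x+iy) = Tx + iTy$. In other words, regarded as an invertible operator on the real Banach space $X_\C$, the map $T_\C$ is nothing but $T \oplus T$.

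First I would set $M = \{x + i0 : x \in X\}$ and $N = \{0 + iy : y \in X\}$. These are closed subspaces of $X_\C$ with $X_\C = M \oplus N$, and both are $T_\C$-invariant because $T_\C$ preserves real and imaginary parts. Viewing $X_\C$ as a real Banach space and $T_\C$ as an invertible operator on it, Proposition~\ref{Shad2} then yields that $T_\C$ has the shadowing property (respectively the limit shadowing property, the $\ell_p$ shadowing property) if and only if both $T_\C|_M$ and $T_\C|_N$ have that property.

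Next I would observe that the linear maps $x \mapsto x + i0$ and $y \mapsto 0 + iy$ are isomorphisms of $X$ onto $M$ and onto $N$, respectively, intertwining $T$ with $T_\C|_M$ and with $T_\C|_N$. Since each of the three shadowing notions is invariant under conjugation by a bounded linear isomorphism (such a map carries $\delta$-pseudotrajectories to $\delta'$-pseudotrajectories and genuine trajectories to genuine trajectories, with the relevant errors scaled only by fixed constants), it follows that $T_\C|_M$ and $T_\C|_N$ each enjoy the property in question if and only if $T$ does. Combining this with the conclusion of the previous step gives the desired equivalence.

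The only point requiring a little care is precisely the invariance of the three notions under these linear identifications and under the passage to the equivalent product norm on $X_\C$; but this is exactly the kind of estimate already carried out in the proof of Proposition~\ref{Shad2}, where the open mapping theorem supplies the comparison constant $\beta$, so no genuine obstacle arises. Indeed, once $T_\C$ is recognized as $T \oplus T$, the corollary is an immediate specialization of Proposition~\ref{Shad2}.
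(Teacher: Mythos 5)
Your proposal is correct and is essentially the argument the paper intends: Corollary~\ref{Shad3} is stated without proof as a consequence of Proposition~\ref{Shad2}, and the evident derivation is exactly yours, namely viewing $X_\C$ as a real Banach space, decomposing it into the $T_\C$-invariant real and imaginary parts $M$ and $N$, and noting that $T_\C|_M$ and $T_\C|_N$ are each conjugate to $T$ by a bounded isomorphism. Your attention to the equivalence of the complexification norm with the product norm and to the invariance of the shadowing notions under such conjugations is exactly the right (routine) bookkeeping.
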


In view of the above corollary, we will tacitly assume complex scalars
in all the proofs that follow.

\smallskip
It is well-known that any invertible hyperbolic operator on any Banach space
has the shadowing property (see \cite{JOmb94}, for instance). We shall
prove that it also has the limit shadowing property and the $\ell_p$
shadowing property for every $1 \leq p < \infty$. In fact, we will derive
this from a more general theorem which will also imply the existence of
nonhyperbolic operators with these shadowing properties. For this purpose,
we shall need two lemmas.

\begin{lemma} (see \cite{SPil99})
\label{LemmaA}
An invertible operator $T$ on a Banach space $X$ has the shadowing property
if and only if there is a constant $K > 0$ such that for every bounded
sequence $(z_n)_{n \in \Z}$ in $X$, there is a sequence $(y_n)_{n \in \Z}$
in $X$ such that
\begin{equation}
\sup_{n \in \Z} \|y_n\| \leq K \sup_{n \in \Z} \|z_n\|\label{Sha2.1}
\end{equation}
and
\begin{equation}
y_{n+1} = Ty_n + z_n \ \ \text{ for all } n \in \Z.\label{Sha3.1}
\end{equation}
\end{lemma}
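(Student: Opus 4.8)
The plan is to set up a linear dictionary between pseudotrajectories of $T$ and solutions of the inhomogeneous difference equation (\ref{Sha3.1}). The key substitution is the following: given a sequence $(x_n)_{n \in \Z}$ in $X$ and a point $x \in X$, put $y_n = x_n - T^nx$. Then $(T^nx)_{n \in \Z}$ is a genuine trajectory, one has $\|x_n - T^nx\| = \|y_n\|$, and a direct computation gives $y_{n+1} - Ty_n = x_{n+1} - Tx_n$. Hence the \emph{defect} $z_n := x_{n+1} - Tx_n$ of the sequence $(x_n)$ coincides with the forcing term in (\ref{Sha3.1}), and $(x_n)$ is $\eps$-shadowed by $(T^nx)$ precisely when the corresponding solution $(y_n)$ of (\ref{Sha3.1}) satisfies $\sup_{n \in \Z}\|y_n\| < \eps$. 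Everything below is driven by this correspondence together with the homogeneity coming from the linearity of $T$.

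To prove the implication $(\Leftarrow)$, I would assume the solvability condition with constant $K$ and verify the shadowing property for an arbitrary $\eps > 0$. Given a $\delta$-pseudotrajectory $(x_n)$ with $\delta < \eps/K$, set $z_n = x_{n+1} - Tx_n$, so that $\sup_{n \in \Z}\|z_n\| \leq \delta$. The hypothesis furnishes $(y_n)$ with $y_{n+1} = Ty_n + z_n$ and $\sup_{n \in \Z}\|y_n\| \leq K\delta < \eps$. Putting $x = x_0 - y_0$, an induction in both directions (using the invertibility of $T$ to go backward) shows that $x_n - y_n = T^nx$ for all $n \in \Z$, whence $\|x_n - T^nx\| = \|y_n\| < \eps$. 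One could instead invoke Remark~\ref{Shad1} and treat a single value of $\eps$.

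For the converse $(\Rightarrow)$, I would fix one value, say $\eps = 1$, and let $\delta > 0$ be the associated shadowing constant, which by Remark~\ref{Shad1} is all that is needed. Given a bounded sequence $(z_n)$ with $M := \sup_{n \in \Z}\|z_n\| > 0$ (the case $M = 0$ being settled by $y_n \equiv 0$), rescale to $\tilde z_n = (\delta/M)z_n$, so that $\|\tilde z_n\| \leq \delta$, and construct a sequence $(x_n)$ with $x_{n+1} - Tx_n = \tilde z_n$ by setting $x_0 = 0$ and propagating the recurrence forward and, using the invertibility of $T$, backward via $x_n = T^{-1}(x_{n+1} - \tilde z_n)$. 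This $(x_n)$ is a $\delta$-pseudotrajectory, so shadowing yields $x \in X$ with $\sup_{n \in \Z}\|x_n - T^nx\| \leq 1$; setting $\tilde y_n = x_n - T^nx$ and then $y_n = (M/\delta)\tilde y_n$ produces a solution of (\ref{Sha3.1}) satisfying (\ref{Sha2.1}) with $K = 1/\delta$, since $\sup_{n \in \Z}\|y_n\| \leq M/\delta = K\sup_{n \in \Z}\|z_n\|$. The only points requiring care are the use of invertibility to extend the pseudotrajectory over all of $\Z$ and the two homogeneity rescalings (by $\delta/M$ and back by $M/\delta$) that convert the fixed shadowing constant into the uniform bound $K$; neither is a genuine obstacle, as both are immediate consequences of the linearity of $T$.
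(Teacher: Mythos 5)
Your proposal is correct and follows essentially the same route as the paper's own proof: the same substitution $y_n = x_n - T^n x$ linking defects of pseudotrajectories to the forcing term in (\ref{Sha3.1}), the same rescaling by $\delta/L$ and back to extract the uniform constant $K = 1/\delta$ in the forward direction, and the same telescoping identity $x_n - y_n = T^n(x_0 - y_0)$ in the converse. No further comment is needed.
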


\begin{proof}
Assume that $T$ has the shadowing property and let $\delta > 0$ be the
constant that appears in the definition of shadowing associated to $\eps = 1$.
Consider a bounded sequence $(z_n)_{n \in \Z}$ and put
$L = \sup_{n \in \Z} \|z_n\|$. Let $(x_n)_{n \in \Z}$ be such that
$x_{n+1} = Tx_n + \frac{\delta}{L} z_n$ for all
$n \in \Z$. Observe that $(x_n)_{n \in \Z}$ is completely determined by $x_0$.
Then $(x_n)_{n \in \Z}$ is a $\delta$-pseudotrajectory of $T$.
By hypothesis, there exists $x \in X$ such that
$$
\|x_n - T^nx\| < 1 \ \ \text{ for all } n \in \Z.
$$
By putting $y_n = \frac{L}{\delta}(x_n - T^nx)$, we have that (\ref{Sha2.1})
holds with $K = 1/\delta$ and (\ref{Sha3.1}) also holds.

For the converse, it is enough to consider $\eps = 1$
(Remark~\ref{Shad1}). Put $\delta = \frac{1}{2K}$ and let
$(x_n)_{n \in \Z}$ be a $\delta$-pseudotrajectory of $T$.
Put $z_n = x_{n+1} - Tx_n$ for all $n \in \Z$. By hypothesis, there exists
$(y_n)_{n \in \Z}$ such that (\ref{Sha2.1}) and (\ref{Sha3.1}) hold.
Since $x_{n+1} - y_{n+1} = T(x_n - y_n)$ for all $n \in \Z$, it follows that
$x_n - y_n = T^n(x_0 - y_0)$ for all $n \in \Z$. Thus,
$$
\|x_n - T^n(x_0 - y_0)\| = \|y_n\|
  \leq K \sup_{j \in \Z} \|z_j\| \leq K\delta = 1,
$$
for all $n \in \Z$.
\end{proof}

\begin{lemma}\label{LemmaB}
An invertible operator $T$ on a Banach space $X$ has the limit shadowing
property (the $\ell_p$ shadowing property) if and only if for every
sequence $(z_n)_{n \in \Z}$ in $X$ with
\begin{equation}
\lim_{|n| \to \infty} \|z_n\| = 0 \ \ \ \ \
\Big(\sum_{n \in \Z} \|z_n\|^p < \infty \Big),\label{Sha1}
\end{equation}
there exists a sequence $(y_n)_{n \in \Z}$ in $X$ such that
\begin{equation}
\lim_{|n| \to \infty} \|y_n\| = 0 \ \ \ \ \
\Big(\sum_{n \in \Z} \|y_n\|^p < \infty \Big)\label{Sha2}
\end{equation}
and
\begin{equation}
y_{n+1} = Ty_n + z_n \ \ \text{ for all } n \in \Z.\label{Sha3}
\end{equation}
\end{lemma}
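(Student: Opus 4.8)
The plan is to mirror the proof of Lemma~\ref{LemmaA}, exploiting the correspondence between a sequence $(x_n)_{n \in \Z}$ satisfying the recursion $x_{n+1} = Tx_n + z_n$ and its ``error sequence'' $(z_n)_{n \in \Z}$, together with the linearity of $T$. The crucial algebraic observation is that if $(x_n)$ solves $x_{n+1} = Tx_n + z_n$ and $x \in X$, then $y_n := x_n - T^n x$ solves the \emph{same} inhomogeneous recursion $y_{n+1} = Ty_n + z_n$; conversely, the difference of two solutions of this recursion is always of the form $n \mapsto T^n v$. I would treat the limit shadowing and $\ell_p$ shadowing cases in parallel, since the only difference is whether one imposes $\lim_{|n| \to \infty} \|\cdot\| = 0$ or $\sum_{n \in \Z} \|\cdot\|^p < \infty$, exactly as the statement is phrased with parentheses.

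For the forward implication, suppose $T$ has the limit ($\ell_p$) shadowing property and let $(z_n)$ satisfy (\ref{Sha1}). Fixing any $x_0 \in X$, I define $(x_n)_{n \in \Z}$ by the recursion $x_{n+1} = Tx_n + z_n$. Then $\|Tx_n - x_{n+1}\| = \|z_n\|$, so the hypothesis on $(z_n)$ says precisely that $(x_n)$ is a sequence of the type to which the relevant shadowing property applies. Invoking limit ($\ell_p$) shadowing yields $x \in X$ with $\lim_{|n| \to \infty} \|x_n - T^n x\| = 0$ (resp.\ $\sum_n \|x_n - T^n x\|^p < \infty$). Setting $y_n = x_n - T^n x$ gives (\ref{Sha2}) at once, and a one-line computation using $x_{n+1} = Tx_n + z_n$ gives (\ref{Sha3}).

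For the converse, I assume the stated condition and take any sequence $(x_n)$ witnessing the hypothesis of limit ($\ell_p$) shadowing, i.e.\ with $\lim_{|n| \to \infty} \|Tx_n - x_{n+1}\| = 0$ (resp.\ $\sum_n \|Tx_n - x_{n+1}\|^p < \infty$). Putting $z_n = x_{n+1} - Tx_n$, the sequence $(z_n)$ satisfies (\ref{Sha1}), so the condition produces $(y_n)$ satisfying (\ref{Sha2}) and (\ref{Sha3}). Since both $(x_n)$ and $(y_n)$ solve the same inhomogeneous recursion, one has $x_{n+1} - y_{n+1} = T(x_n - y_n)$, whence $x_n - y_n = T^n(x_0 - y_0)$ for all $n \in \Z$. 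Therefore $x_n - T^n(x_0 - y_0) = y_n$, and the decay (resp.\ summability) of $(y_n)$ shows that $x := x_0 - y_0$ shadows $(x_n)$ in the required sense.

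I do not expect a genuine obstacle: everything reduces to the identity $y_n = x_n - T^n x$ and the linearity of $T$. The one point worth flagging is that, in contrast with Lemma~\ref{LemmaA}, no rescaling and no constant $K$ are needed here, because the limit shadowing and $\ell_p$ shadowing properties are formulated purely as implications between asymptotic conditions on sequences, with no $\eps$--$\delta$ quantification to calibrate. Accordingly, I would carry the two cases through simultaneously, keeping the parenthetical $\ell_p$ assertions alongside the limit-shadowing ones throughout.
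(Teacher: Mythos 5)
Your proposal is correct and follows essentially the same route as the paper: in both directions one passes between a pseudotrajectory $(x_n)$ and the error sequence $(z_n)$ via the recursion $x_{n+1}=Tx_n+z_n$, sets $y_n=x_n-T^nx$ (respectively recovers $x=x_0-y_0$ from $x_n-y_n=T^n(x_0-y_0)$), and treats the limit and $\ell_p$ cases in parallel. No substantive differences.
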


\begin{proof}
Assume that $T$ has the limit shadowing property (the $\ell_p$ shadowing
property) and consider a sequence $(z_n)_{n \in \Z}$ satisfying (\ref{Sha1}).
Let $(x_n)_{n \in \Z}$ be such that $x_{n+1} = Tx_n + z_n$ for all $n \in \Z$.
Then, by hypothesis, there exists $x \in X$ such that
$\lim_{|n| \to \infty} \|x_n - T^nx\| = 0$
$\big(\sum_{n \in \Z} \|x_n - T^nx\|^p < \infty\big)$.
Hence, by putting $y_n = x_n - T^nx$ for all $n \in \Z$, we have that
(\ref{Sha2}) and (\ref{Sha3}) hold.

For the converse, consider $(x_n)_{n \in \Z}$ such that
$$
\lim_{|n| \to \infty} \|x_{n+1} - Tx_n\| = 0 \ \ \ \ \
\Big(\sum_{n \in \Z} \|x_{n+1} - Tx_n\|^p < \infty\Big).
$$
Put $z_n = x_{n+1} - Tx_n$ for all $n \in \Z$. Then $(z_n)_{n \in \Z}$
satisfies (\ref{Sha1}). Hence, by hypothesis, there exists
$(y_n)_{n \in \Z}$ such that (\ref{Sha2}) and (\ref{Sha3}) hold.
Since $x_n - T^n(x_0 - y_0) = y_n$ for all $n \in \Z$, we are done.
\end{proof}

\begin{theorem}\label{SufficienceShadowing}
Let $T$ be an invertible operator on a Banach space $X$. Suppose that
\begin{equation}
X = M \oplus N,\label{AA}
\end{equation}
where $M$ and $N$ are closed subspaces of $X$ with $T(M) \subset M$
and $T^{-1}(N) \subset N$. If
\begin{equation}
\sigma(T|_M) \subset \D \ \ \ \ \text{ and } \ \ \ \
\sigma(T^{-1}|_N) \subset \D,\label{BB}
\end{equation}
then
$T$ has the shadowing property, the limit shadowing property and the $\ell_p$
shadowing property
for all $1 \leq p < \infty$.
\end{theorem}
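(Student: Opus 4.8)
The plan is to establish all three properties through the operator-theoretic characterizations in Lemmas~\ref{LemmaA} and~\ref{LemmaB}, that is, by solving the inhomogeneous equation $y_{n+1} = Ty_n + z_n$ with a bounded (resp. null, resp. $\ell_p$) sequence $(y_n)$ controlled linearly by $(z_n)$. Note first that Proposition~\ref{Shad2} is \emph{not} directly applicable: the hypotheses $T(M) \subset M$ and $T^{-1}(N) \subset N$ only make $M$ forward-invariant and $N$ backward-invariant, so $N$ need not be $T$-invariant and the splitting (\ref{AA}) is merely ``triangular''. Hence I would carry out the construction by hand.

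First I would record the algebraic structure. Let $P$ and $Q = I - P$ be the bounded projections of $X$ onto $M$ and $N$ given by (\ref{AA}). Since $T(M) \subset M$ we have $QTP = 0$, and since $T^{-1}(N) \subset N$ we have $PT^{-1}Q = 0$; thus $A := T|_M : M \to M$ and $H := T^{-1}|_N : N \to N$ are well defined and bounded. By (\ref{BB}), $r(A) < 1$ and $r(H) < 1$, where $r(\cdot)$ is the spectral radius, so the spectral radius formula yields $\sum_{j \geq 0} \|A^j\| < \infty$ and $\sum_{j \geq 0} \|H^j\| < \infty$. Setting $D := QT|_N : N \to N$ and $B := PTQ|_N : N \to M$, the key identity is $DH = I_N$, which holds because for $b \in N$ one has $DHb = QTT^{-1}b = Qb = b$. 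Crucially, I will never need $HD = I_N$, and this is exactly what lets the argument tolerate $A$ and $D$ being noninvertible (which is allowed, since (\ref{BB}) only forces the spectra into the open disc $\D$).

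Next, given a bounded sequence $(z_n)_{n \in \Z}$, I would split $u_n = Pz_n \in M$ and $v_n = Qz_n \in N$ and solve the recursion componentwise. Applying $Q$ and using $QTP = 0$ shows that $b_n := Qy_n$ must satisfy $b_{n+1} = Db_n + v_n$, and the explicit bounded solution is
$$
b_n = -\sum_{j=0}^{\infty} H^{j+1} v_{n+j},
$$
which solves the recursion precisely because $DH = I_N$, with $\sup_n \|b_n\| \leq \big(\sum_{j \geq 1} \|H^j\|\big) \sup_n \|v_n\|$. Applying $P$ and using $T(M) \subset M$ shows that $a_n := Py_n$ must satisfy $a_{n+1} = Aa_n + w_n$ with source $w_n = Bb_n + u_n$, whose bounded solution is $a_n = \sum_{j=0}^{\infty} A^j w_{n-1-j}$, with $\sup_n \|a_n\| \leq \big(\sum_{j \geq 0} \|A^j\|\big)\sup_n \|w_n\|$. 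Since $\sup_n \|w_n\|$ is bounded by a constant times $\sup_n \|z_n\|$ (using boundedness of $P$, $Q$, $B$ and the bound on $(b_n)$), the sequence $y_n = a_n + b_n$ solves $y_{n+1} = Ty_n + z_n$ with $\sup_n \|y_n\| \leq K \sup_n \|z_n\|$ for a constant $K$ depending only on $T$, $M$, $N$. By Lemma~\ref{LemmaA}, $T$ has the shadowing property.

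For the limit and $\ell_p$ shadowing properties I would run the very same construction and invoke Lemma~\ref{LemmaB}, so it suffices to check that the two formulas preserve the relevant decay of $(z_n)$. Both are one-sided discrete convolutions of $(\|v_n\|)$ (resp. $(\|w_n\|)$) against the $\ell_1$ kernels $(\|H^{j+1}\|)_j$ and $(\|A^j\|)_j$. A convolution of a null sequence with an $\ell_1$ kernel is again null (split the sum into a finite head, which tends to $0$ because the shifted entries do, and a tail controlled by the $\ell_1$-tail of the kernel), so $\lim_{|n| \to \infty} \|z_n\| = 0$ forces in turn $\lim_{|n| \to \infty} \|v_n\| = 0$, $\lim_{|n| \to \infty} \|b_n\| = 0$, $\lim_{|n| \to \infty} \|w_n\| = 0$ and $\lim_{|n| \to \infty} \|a_n\| = 0$; likewise Young's inequality for convolutions gives $\sum_n \|y_n\|^p < \infty$ whenever $\sum_n \|z_n\|^p < \infty$. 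The one genuinely delicate point in the whole argument is the one isolated above: because $A = T|_M$ and $D$ may fail to be invertible, one cannot diagonalize the splitting or invert $T$ on $N$, so the solvability of the $N$-recursion must rest solely on the one-sided identity $DH = I_N$.
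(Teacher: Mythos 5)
Your proof is correct, and it ultimately builds the same solution as the paper, though by a somewhat different route. The paper simply writes down
$y^{(1)}_n = \sum_{k\ge 0} T^k z^{(1)}_{n-k-1} \in M$ and
$y^{(2)}_n = -\sum_{k\ge 1} T^{-k} z^{(2)}_{n+k-1} \in N$
and checks directly that each component separately satisfies $y^{(i)}_{n+1} = T y^{(i)}_n + z^{(i)}_n$; the hypotheses $T(M)\subset M$ and $T^{-1}(N)\subset N$ are used only to keep these sums inside $M$ and $N$ and to convert $\|T^k z^{(1)}\|$ and $\|T^{-k} z^{(2)}\|$ into geometric bounds via the spectral radius formula. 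Your triangular-system bookkeeping ($QTP=0$, $DH=I_N$, the coupling operator $B$) is sound, but it is more machinery than the problem requires: your $b_n$ coincides with the paper's $y^{(2)}_n$, and since $Tb_n = b_{n+1} - v_n \in N$ one in fact has $Bb_n = PTb_n = 0$, so the coupling term you feed into the $M$-recursion vanishes identically and your $a_n$ reduces to the paper's $y^{(1)}_n$. In other words the recursion decouples exactly; the point you single out as ``genuinely delicate'' (the one-sided identity $DH=I_N$ standing in for invertibility of $D=QT|_N$) is a real feature of your formulation, but the paper circumvents it by never inverting anything on $N$ --- the backward sum is built directly from $T^{-1}$, whose restriction to $N$ is all that is needed. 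Your correct observation that Proposition~\ref{Shad2} does not apply (since $N$ need not be $T$-invariant) matches the reason the paper argues by hand, and your remaining estimates (geometric series for boundedness, head--tail splitting for the null case, Young/H\"older for $\ell_p$) are the same as the paper's.
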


\begin{proof}
By (\ref{AA}), for each $x \in X$, there are unique $x^{(1)} \in M$ and
$x^{(2)} \in N$ satisfying $x = x^{(1)} + x^{(2)}$. Moreover, there is a
constant $\beta > 0$ such that
$$
\|x^{(1)}\| \leq \beta \|x\| \ \text{ and } \
\|x^{(2)}\| \leq \beta \|x\| \ \ \text{ for all } x \in X.
$$

By (\ref{BB}), $r(T|_M) < 1$ and $r(T^{-1}|_N) < 1$.
Choose $t \in \R$ such that
$$
\max\{r(T|_M),r(T^{-1}|_N)\} < t < 1.
$$
It follows from the spectral radius formula that there exists a constant
$C \geq 1$ such that
$$
\|(T|_M)^n\| \leq C\, t^n \ \text{ and } \
\|(T^{-1}|_N)^n\| \leq C\, t^n \ \ \text{ for all } n \in \N_0.
$$

Consider a bounded sequence $(z_n)_{n \in \Z}$ in $X$. For each $n \in \Z$,
we define
$$
y^{(1)}_n = \sum_{k=0}^\infty T^kz^{(1)}_{n-k-1} \in M, \ \ \
y^{(2)}_n = - \sum_{k=1}^\infty T^{-k}z^{(2)}_{n+k-1} \in N
$$
and
$$
y_n = y^{(1)}_n + y^{(2)}_n \in X.
$$
An easy computation shows that (\ref{Sha3.1}) (which is the same as
(\ref{Sha3})) holds. Moreover,
\begin{equation}
\|y^{(1)}_n\| \leq C \sum_{k=0}^\infty t^k \|z^{(1)}_{n-k-1}\|
\ \ \text{ and } \ \
\|y^{(2)}_n\| \leq C \sum_{k=1}^\infty t^k \|z^{(2)}_{n+k-1}\|.\label{Est}
\end{equation}
Hence,
$$
\sup_{n \in \Z} \|y_n\| \leq \Big(\frac{2 \beta C}{1-t}\Big)
\sup_{n \in \Z} \|z_n\|,
$$
which proves that $T$ has the shadowing property by Lemma~\ref{LemmaA}.

Now, assume that $(z_n)_{n \in \Z}$ satisfies (\ref{Sha1}).
By Lemma~\ref{LemmaB}, it remains to show that (\ref{Sha2}) holds.
By (\ref{Est}), for each $j \in \N$ and each $i \in \{1,2\}$,
$$
\|y^{(i)}_n\| \leq C \Bigg(\sum_{k=0}^j t^k\Bigg)
               \Bigg(\sup_{0 \leq k \leq j} \|z^{(i)}_{n+(-1)^ik-1}\|\Bigg)
           + C \Bigg(\sum_{k=j+1}^\infty t^k\Bigg)
               \Bigg(\sup_{k \in \Z} \|z^{(i)}_k\|\Bigg),
$$
which shows that $\lim_{|n| \to \infty} \|y_n\| = 0$ whenever
$\lim_{|n| \to \infty} \|z_n\| = 0$.
In the case $p = 1$, it is enough to use the estimates
$$
\sum_{n \in \Z} \|y^{(i)}_n\|
  \leq C \sum_{n \in \Z} \sum_{k=0}^\infty t^k \|z^{(i)}_{n+(-1)^ik-1}\|
     = C \Bigg(\sum_{k=0}^\infty t^k\Bigg)
         \Bigg(\sum_{n \in \Z} \|z^{(i)}_n\|\Bigg) \ \ \ \
  (i \in \{1,2\}).
$$
Finally, in the case $1 < p < \infty$, we consider its conjugate exponent
$q$ (i.e., $1/p + 1/q = 1$) and apply H\"older's inequality to obtain
$$
\|y^{(i)}_n\|
  \leq C \Bigg(\sum_{k=0}^\infty t^\frac{qk}{2}\Bigg)^\frac{1}{q}
       \Bigg(\sum_{k=0}^\infty t^\frac{pk}{2}
             \|z^{(i)}_{n+(-1)^ik-1}\|^p\Bigg)^\frac{1}{p} \ \ \ \
  (i \in \{1,2\}).
$$
As a consequence,
$$
\sum_{n \in \Z} \|y^{(i)}_n\|^p
  \leq C^p \Bigg(\sum_{k=0}^\infty t^\frac{qk}{2}\Bigg)^\frac{p}{q}
           \Bigg(\sum_{k=0}^\infty t^\frac{pk}{2}\Bigg)
           \Bigg(\sum_{n \in \Z} \|z^{(i)}_n\|^p\Bigg) \ \ \ \
  (i \in \{1,2\}).
$$
Thus, in all cases, (\ref{Sha2}) holds.
\end{proof}

As an immediate consequence, we have the following:

\begin{corollary}\label{HS}
Every invertible hyperbolic operator $T$ on a Banach space $X$ has the
shadowing property, the limit shadowing property and the $\ell_p$ shadowing
property for all $1 \leq p < \infty$.
\end{corollary}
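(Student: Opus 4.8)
The plan is to apply Theorem~\ref{SufficienceShadowing} directly, feeding it the hyperbolic splitting of $T$. First I would invoke the characterization of hyperbolicity recorded just after Definition~\ref{Def3}: since $T$ is invertible and $\sigma(T) \cap \T = \emptyset$, there is an equivalent norm on $X$ and a splitting
$$
X = X_s \oplus X_u, \ \ \ \ \ T = T_s \oplus T_u,
$$
into closed $T$-invariant subspaces, where $T_s = T|_{X_s}$ is a proper contraction ($\|T_s\| < 1$) and $T_u = T|_{X_u}$ is invertible and a proper dilation ($\|T_u^{-1}\| < 1$). The idea is that this is exactly the shape of data required by the theorem, once we set $M = X_s$ and $N = X_u$.

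Next I would check the hypotheses of Theorem~\ref{SufficienceShadowing}. The decomposition $X = M \oplus N$ into closed subspaces is immediate from the splitting. Invariance $T(M) \subset M$ holds because $X_s$ is $T$-invariant, and $T^{-1}(N) \subset N$ holds because $X_u$ is $T$-invariant and $T_u$ is invertible on $X_u$, so in fact $T^{-1}(N) = N$. For the spectral condition~(\ref{BB}), I would use the spectral radius estimate $r(S) \leq \|S\|$ together with the bounds from the adapted norm: from $\|T|_M\| = \|T_s\| < 1$ we get $r(T|_M) < 1$, hence $\sigma(T|_M) \subset \D$; and from $\|T^{-1}|_N\| = \|T_u^{-1}\| < 1$ we get $r(T^{-1}|_N) < 1$, hence $\sigma(T^{-1}|_N) \subset \D$.

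With all hypotheses verified, Theorem~\ref{SufficienceShadowing} yields at once that $T$ has the shadowing property, the limit shadowing property and the $\ell_p$ shadowing property for every $1 \leq p < \infty$, which is the assertion. There is essentially no obstacle here, since the entire analytic content lives in the theorem; the only point worth a word is that the argument takes place in the adapted equivalent norm, but each of the three shadowing notions is invariant under passage to an equivalent norm (they are purely topological, and the $\eps$--$\delta$ formulation only changes by multiplicative constants), so the conclusion transfers back to the original norm. This is precisely why the corollary is \emph{immediate}.
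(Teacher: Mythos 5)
Your proof is correct and is exactly the argument the paper intends (the paper simply labels the corollary ``an immediate consequence'' of Theorem~\ref{SufficienceShadowing}): take $M = X_s$, $N = X_u$ from the hyperbolic splitting and verify hypotheses (\ref{AA}) and (\ref{BB}). One small simplification: your closing remark about renorming is not needed, since the hypotheses of Theorem~\ref{SufficienceShadowing} are spectral and the spectrum (hence condition (\ref{BB})) is independent of the choice of equivalent norm, so the theorem applies directly in the original norm.
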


It is well-known that the shadowing property implies hyperbolicity
in the cases of invertible operators on finite-dimensional euclidean spaces
\cite{JOmb94} and invertible normal operators on Hilbert spaces \cite{MMaz00}.
It is a basic question in linear dynamics whether or not this implication
is always true, that is, whether or not shadowing and hyperbolicity coincide
for invertible operators on Banach (or Hilbert) spaces.
This question appeared explicitly in \cite[Page 148]{MMaz00}, for instance.
Let us now answer this question in the negative as an application of
Theorem~\ref{SufficienceShadowing}.
The following is much stronger.

\medskip
\noindent {\bf Theorem D.}
{\it Let $X = \ell_q(\Z)$ $(1 \leq q < \infty)$ or $X = c_0(\Z)$.
There exists an invertible weighted shift $T$ on $X$ which satisfies the
frequent hypercyclicity criterion (hence it is not hyperbolic) and has the
shadowing property, the limit shadowing property and the $\ell_p$ shadowing
property for all $1 \leq p < \infty$.  Moreover any weighted shift operator sufficiently close to $T$ also satisfies the thesis of previous statement }

\medskip
Recall that an operator $T$ on a separable Banach space $X$ is said to
satisfy the {\em frequent hypercyclicity criterion} if there exist a dense
subset $X_0$ of $X$ and a map $S : X_0 \to X_0$ such that the following
properties hold for every $x \in X_0$:
\begin{itemize}
\item $\displaystyle \sum_{n=0}^\infty T^nx$ converges unconditionally;
\item $\displaystyle \sum_{n=0}^\infty S^nx$ converges unconditionally;
\item $TSx = x$.
\end{itemize}
If $T$ satisfies this criterion, then $T$ is frequently hypercyclic,
Devaney chaotic, mixing and densely distributionally chaotic; see
\cite[Section~9.2]{KGroAPer11} and \cite[Corollary~20]{BerBonMulPer13}.
Let us also recall that $T$ is said to be {\em Devaney chaotic} if it is
hypercyclic and has a dense set of periodic points. Of course, an invertible
operator which has a nontrivial periodic point is not expansive and, in
particular, it is not hyperbolic. Thus, the above theorem implies the
existence of operators with the shadowing property that are not hyperbolic
and exhibit several types of chaotic behaviors.

\noindent{\em Proof of theorem D}
Fix a real number $\alpha > 1$ and let $T$ be the bilateral weighted forward
shift on $X$ whose weight sequence $(w_n)_{n \in \Z}$ is given by
$w_n = \alpha$ if $n < 0$ and $w_n = 1/\alpha$ if $n \geq 0$.
By applying Theorem~\ref{SufficienceShadowing} with
$$
M = \{(x_n)_{n \in \Z} : x_n = 0 \text{ for all } n < 0\}
$$
and
$$
N = \{(x_n)_{n \in \Z} : x_n = 0 \text{ for all } n \geq 0\},
$$
we see that $T$ has all the above-mentioned shadowing properties.
Moreover, in order to see that $T$ satisfies the frequent hypercyclicity
criterion, it is enough to consider $X_0$ as the set of all sequences
$(x_n)_{n \in \Z}$ with finite support and $S = T^{-1}$. To conclude the second part of the thesis, observe that any weighted shift closed to $T$   also fits in Theorem \ref{SufficienceShadowing} (with the same $M$ and $N$) and satisfies the frequent hypercyclicity criterion.

\qed

\medskip

 The next   remarks highlight the differences between nonlinear finite dimensional dynamics and infinite dimensional linear dynamics,  explaining the status of the shadowing property for finite dimensional diffeomorphisms and raising a series of questions.
 
\begin{remark} As commented in the introduction, for $C^1$ diffeomorphisms on a finite dimensional manifold (see \cite{SPilSTik10}),  Lipschitz shadowing is equivalent to hyperbolicity. Our example proves that this is not the case for  infinite dimensional linear dynamics. In some sense, this shows that when one considers infinite dimensional spaces, even linear dynamics is richer than nonlinear finite dimensional dynamics.
\end{remark}

\begin{remark} The previous theorem  resembles a result in \cite{KP} where the existence of a nonhyperbolic yet having the shadowing property
$C^\infty$ diffeomorphism on a surface is exhibited. In view of the results in \cite{SPilSTik10},  the shadowing property can not be Lipschitz shadowing. Indeed, in  examples provided in \cite{KP} the shadowing property is only $\alpha$-H\"older for some $\alpha <1$.

\end{remark}

\begin{remark} It is worth pointing out  that finite dimensional diffeomorphisms induce infinite dimensional operators: for any  diffeomorphism one obtains finite dimensional linear cocycles  provided by the derivative of that diffeomorphism and those linear cocycles  can be recast as an infinite dimensional linear map (see for instances \cite{bochi} for discussions of linear cocycles). In particular, the proof in   \cite{SPilSTik10}
 is based on analyzing the dynamic of a diffeomorphisms as a linear cocycle, showing that the Lipschitz shadowing implies  shadowing for the cocycle of linear maps and from there concluding hyperbolicity using the results in \cite{VPli77}.   On the other hand, the spectrum problem related to certain nonlinear  infinite dimensional operators, as the  discrete Schr\"odinger operator can be reduced to a linear cocycle (see  \cite{damanik} for instance).
\end{remark}

\begin{remark} It is shown in theorem D that the shadowing property is satisfied for an open set of weighted shifts. It is natural to wonder if the same holds when one considers open sets of linear operators; in particular, is the shadowing property satisfied for any linear operator close to the ones that satisfies theorem D?
\end{remark}

As we have just seen, an invertible bilateral weighted shift can have
the shadowing property without being expansive. The next result presents
an additional condition which guarantees expansivity.

\begin{proposition}
Let $X = \ell_p(\Z)$ $(1 \leq p < \infty)$ or $X = c_0(\Z)$, and consider a
weight sequence $w = (w_n)_{n \in \Z}$ with $\inf_{n \in \Z} |w_n| > 0$.
If $F_w : X \to X$ has the shadowing property and the sequence
$(n F_w^n(e_0))_{n \in \Z}$ is not bounded, then $F_w$ is expansive.
\end{proposition}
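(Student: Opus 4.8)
The plan is to argue by contraposition: assuming $F_w$ is \emph{not} expansive, I will show that the sequence $(n F_w^n(e_0))_{n \in \Z}$ must be bounded, contradicting the hypothesis. The first step is to translate ``not expansive'' into a statement about the orbit of $e_0$. Writing $F_w^n(e_0) = c_n e_n$, where $c_0 = 1$ and $c_{n+1} = w_n c_n$, so that $\|F_w^n(e_0)\| = |c_n|$, the negation of condition (iii) in Theorem~B(a) says exactly that both $\sup_n|w_1\cdots w_n|$ and $\sup_n|w_{-n}\cdots w_{-1}|^{-1}$ are finite, which together with $\inf_n |w_n|>0$ yields $L := \sup_{n\in\Z}\|F_w^n(e_0)\| < \infty$. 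Thus the orbit $(F_w^n(e_0))_{n\in\Z}$ is a bounded sequence.

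Next I would feed this bounded sequence into the shadowing characterization of Lemma~\ref{LemmaA}. Taking $z_n = F_w^n(e_0)$, that lemma produces a constant $K>0$ and a sequence $(y_n)_{n\in\Z}$ with $\sup_n\|y_n\| \le KL$ satisfying $y_{n+1} = F_w y_n + F_w^n(e_0)$ for all $n$. The crucial computation is to identify the solutions of this recurrence. A telescoping identity, $\sum_{k=0}^{n-1} F_w^{\,n-1-k}(F_w^k e_0) = n\, F_w^{\,n-1}(e_0)$, suggests that $y_n^{p} = n\, F_w^{\,n-1}(e_0)$ is a particular solution, and indeed one checks directly that $F_w y_n^p + F_w^n(e_0) = (n+1)F_w^n(e_0) = y_{n+1}^p$ for every $n\in\Z$. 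Since any two solutions of the recurrence differ by a homogeneous solution $F_w^n v$, the bounded sequence furnished by shadowing has the form $y_n = n\, F_w^{\,n-1}(e_0) + F_w^n v$ with $v = y_0$.

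Finally I would extract the decay of $c_n$ by projecting onto a single coordinate. Both $n\, F_w^{n-1}(e_0) = n c_{n-1} e_{n-1}$ and the relevant part of $F_w^n v$ live in coordinate $n-1$; using the formula $F_w^n(e_j) = (c_{j+n}/c_j)\,e_{j+n}$ and $c_{-1} = 1/w_{-1}$, one finds that the coordinate-$(n-1)$ entry of $y_n$ equals $c_{n-1}\,(n + v_{-1} w_{-1})$. Since a single coordinate is dominated in modulus by the norm in both $\ell_p(\Z)$ and $c_0(\Z)$, this gives $|c_{n-1}|\,|n + v_{-1}w_{-1}| \le \|y_n\| \le KL$. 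For $|n|$ large the factor $|n + v_{-1}w_{-1}|$ exceeds $|n|/2$, whence $|n|\,\|F_w^{n-1}(e_0)\| \le 2KL$; the finitely many remaining indices are controlled by the crude bound $L$. Reindexing $m = n-1$ then yields $\sup_{m}|m|\,\|F_w^m(e_0)\| < \infty$, that is, $(n\,F_w^n(e_0))_{n\in\Z}$ is bounded --- the desired contradiction.

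I expect the only real subtlety to be the recurrence-solving step: spotting the telescoping that produces the clean particular solution $n\,F_w^{n-1}(e_0)$, and then observing that the homogeneous ambiguity $F_w^n v$ feeds into the $(n-1)$-th coordinate only through the single scalar $v_{-1}$, so that the linearly growing term $n c_{n-1}$ cannot be cancelled. Everything else --- the reduction via Theorem~B(a) and the coordinatewise domination by the norm --- is routine.
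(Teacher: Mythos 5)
Your proof is correct and follows essentially the same route as the paper's: contraposition via Theorem~B(a), the solvability characterization of shadowing (Lemma~\ref{LemmaA}) applied to the bounded orbit of $e_0$, and the observation that any bounded solution of the resulting recurrence must carry a linearly growing coordinate. The only differences are cosmetic: the paper takes $z_n = F_w^{n+1}(e_0)$, so its particular solution is $n\,F_w^n(e_0)$ and it projects onto coordinate $n$ rather than $n-1$, and it obtains your ``particular plus homogeneous'' decomposition by direct telescoping of the recurrence.
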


We will prove at the end of this section that every expansive operator
with the shadowing property is uniformly expansive. So, in the above
proposition we can actually conclude that $F_w$ is uniformly expansive.

\begin{proof}
Assume that $F_w$ is not expansive. By Theorem~B(a),
$$
\sup_{n \in \N} |w_1 \cdot\ldots\cdot w_n| < \infty \ \ \text{ and } \ \
\sup_{n \in \N} |w_{-n} \cdot\ldots\cdot w_{-1}|^{-1} < \infty.
$$
Thus, the sequence $(z_n)_{n \in \Z}$ given by $z_n = F_w^{n+1}(e_0)$
($n \in \Z$) is bounded.
Since $F_w$ has the shadowing property, Lemma~\ref{LemmaA} guarantees the
existence of a bounded sequence $(y_n)_{n \in \Z}$ in $X$ such that
$$
y_{n+1} = F_w(y_n) + z_n \ \ \text{ for all } n \in \Z.
$$
For each $n \in \N$, note that
\begin{align*}
y_n &= F_w^n(y_0) + F_w^{n-1}(z_0) + \cdots + F_w(z_{n-2}) + z_{n-1}\\
    &= F_w^n(y_0) + n F_w^n(e_0)
\end{align*}
and
\begin{align*}
y_{-n} &= F_w^{-n}(y_0) - F_w^{-n}(z_{-1}) - F_w^{-n+1}(z_{-2})
          - \ldots - F_w^{-1}(z_{-n})\\
       &= F_w^{-n}(y_0) - n F_w^{-n}(e_0).
\end{align*}
Write $y_0 = (a_n)_{n \in \Z}$. Then
$$
\|y_n\| \geq |a_0 + n| |w_0 \cdot\ldots\cdot w_{n-1}| \ \ \text{ and } \ \
\|y_{-n}\| \geq |a_0 - n| |w_{-n} \cdot\ldots\cdot w_{-1}|^{-1},
$$
for every $n \in \N$. Since we are assuming that the sequence
$(n F_w^n(e_0))_{n \in \Z}$ is not bounded, these estimates imply that
the sequence $(y_n)_{n \in \Z}$ is not bounded, a contradiction.
\end{proof}

Let us now see that all notions of shadowing considered here coincide
in the finite dimensional setting. As mentioned before the equivalence 
(i) $\Leftrightarrow$ (ii) below is already known (see \cite{JOmb94}, where further references can be found).

\begin{proposition}\label{FD1}
Fix $p \in [1,\infty)$. If $T$ is an invertible operator on a
finite-dimensional Banach space $X$, then the following assertions are
equivalent:
\begin{enumerate}
\item [\rm (i)]   $T$ is hyperbolic;
\item [\rm (ii)]  $T$ has the shadowing property;
\item [\rm (iii)] $T$ has the limit shadowing property;
\item [\rm (iv)]  $T$ has the $\ell_p$ shadowing property.
\end{enumerate}
\end{proposition}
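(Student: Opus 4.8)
The plan is to prove the three nontrivial implications by contraposition, since the forward directions (i)$\Rightarrow$(ii), (iii), (iv) are already supplied by Corollary~\ref{HS}. By Corollary~\ref{Shad3} I may assume complex scalars. I would first split $X = X_h \oplus X_0$, where $X_h$ and $X_0$ are the (closed, $T$-invariant) spectral subspaces associated to $\sigma(T)\setminus\T$ and $\sigma(T)\cap\T$, respectively. The restriction $T|_{X_h}$ is invertible with $\sigma(T|_{X_h})\cap\T=\emptyset$, hence hyperbolic, so by Corollary~\ref{HS} it has all three shadowing properties; Proposition~\ref{Shad2} then reduces each shadowing property of $T$ to the corresponding property of $S:=T|_{X_0}$. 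As $T$ is hyperbolic exactly when $X_0=\{0\}$, it suffices to show that an invertible operator $S$ on a \emph{nonzero} finite-dimensional space $Y$ with $\sigma(S)\subset\T$ possesses none of the three shadowing properties.

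The engine is a reduction to a scalar recurrence. Since $\dim Y<\infty$, one has $\sigma(S^*)=\sigma(S)\subset\T$, so I may fix $\mu\in\T$ and a nonzero $\phi\in Y^*$ with $\phi\circ S=\mu\,\phi$, together with $v_0\in Y$ satisfying $\phi(v_0)=1$. For any sequence of the special form $z_n=f_n v_0$, every $(y_n)$ with $y_{n+1}=Sy_n+z_n$ forces the scalars $a_n:=\phi(y_n)$ to satisfy
$$
a_{n+1}=\mu\,a_n+f_n \qquad (n\in\Z).
$$
Because $|\phi(y_n)|\le\|\phi\|\,\|y_n\|$, any divergence or non-decay of $(a_n)$ that is unavoidable over all choices of the one-parameter homogeneous term is inherited by $(y_n)$. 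Thus each shadowing property is defeated as soon as I exhibit a forcing term $(f_n)$ for which the scalar recurrence has no solution in the relevant sequence space; the witness is then $z_n=f_n v_0$, and the failure is read off from Lemma~\ref{LemmaA} (for shadowing) or Lemma~\ref{LemmaB} (for limit and $\ell_p$ shadowing).

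It remains to produce the forcing terms. For the shadowing property I would take $f_n=\mu^n$, which is bounded; writing $a_n=\mu^n b_n$ turns the recurrence into $b_{n+1}=b_n+\mu^{-1}$, so $|a_n|=|a_0+n\mu^{-1}|\to\infty$ for \emph{every} initial value, and hence no bounded solution exists. For the limit and $\ell_p$ shadowing properties I would argue on $c_0(\Z)$ and $\ell_p(\Z)$ ($1\le p<\infty$): the difference operator $a\mapsto(a_{n+1}-\mu a_n)$ is $U-\mu I$, where $U$ is the bilateral shift, whose spectrum on each of these spaces is all of $\T$ (the annulus of Remark~\ref{HyperbolicShifts}(a) degenerates to $\T$ for unit weights). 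Since the only homogeneous solutions are the constant-modulus sequences $(\mu^n a_0)$, which lie in neither $c_0(\Z)$ nor $\ell_p(\Z)$ unless $a_0=0$, the operator $U-\mu I$ is injective but not invertible, hence not surjective by the open mapping theorem. Choosing $(f_n)$ outside its range and setting $z_n=f_n v_0$ yields, via Lemma~\ref{LemmaB}, a sequence that cannot be shadowed in the required sense.

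The step deserving the most care is the passage from the scalar obstruction back to the full space $Y$: one must rule out that some solution $(y_n)$ evades the bad behavior of $a_n=\phi(y_n)$ by exploiting the directions not seen by $\phi$. This is secured by two observations taken together. First, the $\phi$-image of the full homogeneous solution space $\{S^n c : c\in Y\}$ is precisely the scalar family $(\mu^n\phi(c))$, so the scalar analysis over all initial conditions really captures every full solution. Second, $\|y_n\|\ge|a_n|/\|\phi\|$, so the unavoidable divergence (respectively non-$c_0$ or non-$\ell_p$ behavior) of $(a_n)$ propagates to $(y_n)$ irrespective of its remaining components. The only auxiliary fact to verify is the identification $\sigma(U)=\T$ on $c_0(\Z)$ and $\ell_p(\Z)$, which is the degenerate case of the shift spectrum recorded in Remark~\ref{HyperbolicShifts}(a).
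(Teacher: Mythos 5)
Your argument is correct, but it takes a genuinely different route from the paper's. Both proofs dispose of (i)$\Rightarrow$(ii),(iii),(iv) via Corollary~\ref{HS} and use Proposition~\ref{Shad2} to strip off the hyperbolic part of the spectrum, but they diverge from there. The paper multiplies $T$ by a unimodular scalar to normalize the offending eigenvalue to $1$, passes to a single Jordan block via the Jordan canonical form, and then writes down explicit pseudotrajectories whose first coordinate accumulates harmonic-type sums ($1+\frac12+\cdots+\frac1n$ for limit shadowing and $\ell_p$ shadowing with $p>1$; the sequence $\frac1n$ for $p=1$); it does not reprove (ii)$\Rightarrow$(i) at all, citing \cite{JOmb94} instead. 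You avoid the Jordan form entirely: a single left eigenfunctional $\phi$ with $\phi\circ S=\mu\phi$, $\mu\in\T$ (available because $\sigma(S^{*})=\sigma(S)\subset\T$ in finite dimensions), collapses the recurrence $y_{n+1}=Sy_n+z_n$ to the scalar equation $a_{n+1}=\mu a_n+f_n$, and the inequality $\|y_n\|\geq|\phi(y_n)|/\|\phi\|$ transfers every scalar obstruction back to $Y$. This buys you a self-contained disproof of (ii) (with $f_n=\mu^n$, for which every scalar solution grows like $|n|$, so Lemma~\ref{LemmaA} applies) and a soft disproof of (iii) and (iv): $U-\mu I$ is injective on $c_0(\Z)$ and on $\ell_p(\Z)$ but not invertible since $\sigma(U)=\T$ (the degenerate annulus of Remark~\ref{HyperbolicShifts}(a)), hence not surjective by the open mapping theorem, so Lemma~\ref{LemmaB} fails for some admissible forcing term. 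The price is that your witnesses for (iii) and (iv) are non-constructive, whereas the paper's are explicit; in fact the paper's first-coordinate recursion is exactly your scalar equation with $\mu=1$, $\phi$ the first coordinate functional, and $f_n=\frac{1}{n+1}$, which is an explicit element of $c_0(\Z)\cap\ell_p(\Z)$ ($p>1$) outside the range of $U-I$. All the steps you flag as delicate check out, so either route is valid; yours is marginally more general in that it never invokes the Jordan normal form and covers (ii) without an external reference.
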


\begin{proof}
Suppose that $T$ is not hyperbolic.
Since it is easy to see that if  $T$ has the
shadowing property (the limit shadowing property, the $\ell_p$ shadowing
property), then so does $\lambda T$ whenever $|\lambda| = 1$, we may assume that $1 \in \sigma(T)$.
Hence, by Proposition~\ref{Shad2} and the Jordan canonical form, it is
enough to consider the cases in which $T$ is an operator on $\C^k$ (with
the euclidean norm) whose canonical matrix has the form
\begin{equation}
\big[1\big] \ \ \ \ \text{ or } \ \ \ \
\left[
\begin{array}{ccccc}
1 &         &        &   \\
1 &    1    &        &   \\
  & \ddots  & \ddots &   \\
  &         &    1   & 1 \\
\end{array}
\right].\label{Sha4}
\end{equation}
We shall handle both cases simultaneously, but we remark that in the first
case the vectors we are going to define have only the first coordinate.

Assume $p > 1$ and consider the sequence $(x_n)_{n \in \Z}$ in $\C^k$
given by $x_n = (0,\ldots,0)$ for $n \leq 0$ and
$$
x_{n+1} = \Big(1 + \frac{1}{2} + \cdots + \frac{1}{n+1},
               (Tx_n)_2,\ldots,(Tx_n)_k\Big) \ \ \text{ for } n \geq 0,
$$
where $(x)_j$ denotes the $j^{\text{th}}$ coordinate of the vector
$x \in \C^k$. Then
$$
\sum_{n \in \Z} \|Tx_n - x_{n+1}\|^p
  = \sum_{n=0}^\infty \Big(\frac{1}{n+1}\Big)^p
  < \infty.
$$
However, it is not possible to find an $x \in X$ such that
$$
\lim_{|n| \to \infty} \|x_n - T^n x\| = 0,
$$
since
$$
\|x_n - T^n x\| \geq |(x_n - T^n x)_1| = \Big|\Big(1 + \frac{1}{2} + \cdots
  + \frac{1}{n}\Big) - (x)_1\Big| \to \infty \ \ \text{ as } n \to \infty,
$$
for every $x \in \C^k$. This shows that $T$ does not have the limit shadowing
property nor the $\ell_p$ shadowing property for any $p > 1$.

Now, assume $p = 1$. Consider the sequence $(x_n)_{n \in \Z}$ in $\C^k$
given by $x_n = (0,\ldots,0)$ for $n \leq 0$ and
$$
x_{n+1} = \Big(\frac{1}{n+1},(Tx_n)_2,\ldots,(Tx_n)_k\Big) \ \
  \text{ for } n \geq 0.
$$
Then
$$
\sum_{n \in \Z} \|Tx_n - x_{n+1}\| = 1 + \sum_{n=1}^\infty \frac{1}{n(n+1)}
  < \infty.
$$
Nevertheless,
$$
\sum_{n \in \Z} \|x_n - T^n x\| \geq \sum_{n \in \Z} |(x_n - T^n x)_1|
  \ge \sum_{n = 1}^\infty \Big|\frac{1}{n} - (x)_1\Big| = \infty,
$$
for every $x \in \C^k$. Thus, $T$ does not have the $\ell_1$ shadowing
property.
\end{proof}

\smallskip
We say that a sequence $(t_n)_{n \in \Z}$ of scalars is $O(|n|)$ if there
exist $\alpha > 0$, $\beta > 0$ and $n_0 \in \N$ such that
$$
\alpha |n| \leq |t_n| \leq \beta |n| \ \ \text{ whenever } |n| \geq n_0.
$$

Let us now establish a result which will imply a much simpler and shorter
proof of the main result in \cite{MMaz00}
(see Corollary~\ref{NormalShadowing1}).

\begin{theorem}\label{ShadowingOn}
Let $T$ be an invertible operator on a Banach space $X$ such that for all
$z \in X$, the sequence $(\|T^nz\|)_{n \in \Z}$ is not $O(|n|)$. If $T$ has
the shadowing property, then $T$ is uniformly expansive.
\end{theorem}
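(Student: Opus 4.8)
The plan is to prove the contrapositive, using the dichotomy recalled after Definition~\ref{Def3} (due to Hedlund \cite{JHed71}): $T$ is uniformly expansive if and only if $\sigma_a(T)\cap\T=\emptyset$. So I assume $T$ is \emph{not} uniformly expansive, fix $\lambda\in\sigma_a(T)\cap\T$ together with vectors $x_k\in S_X$ satisfying $\delta_k:=\|Tx_k-\lambda x_k\|\to 0$, and use shadowing to manufacture a single nonzero vector whose orbit is $O(|n|)$, contradicting the hypothesis. Throughout I read shadowing through Lemma~\ref{LemmaA}: there is $K>0$ such that every bounded bilateral sequence $(z_n)$ in $X$ equals the defect sequence $z_n=y_{n+1}-Ty_n$ of some bounded $(y_n)$ with $\sup_n\|y_n\|\le K\sup_n\|z_n\|$.

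The decisive first step is to upgrade the approximate eigenvector $x_k$ into a genuine two-sided orbit that is bounded \emph{below}. Consider the unit-norm bounded sequence $\xi_n:=\lambda^n x_k$; a one-line computation gives $\xi_{n+1}-T\xi_n=-\lambda^n(Tx_k-\lambda x_k)$, whose supremum norm is exactly $\delta_k$. Applying Lemma~\ref{LemmaA} to this defect sequence produces a bounded correction $(w_n)$ with $\sup_n\|w_n\|\le K\delta_k$ and $w_{n+1}-Tw_n=\xi_{n+1}-T\xi_n$; hence $h_n:=\xi_n-w_n$ satisfies $h_{n+1}=Th_n$, so $h_n=T^n g$ with $g:=h_0$, and $\big|\,\|T^n g\|-1\,\big|\le\|w_n\|\le K\delta_k$ for every $n$. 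Choosing $k$ so large that $K\delta_k\le\tfrac12$, I obtain a nonzero vector $g$ whose entire orbit is trapped in a compact annulus: $\tfrac12\le\|T^n g\|\le\tfrac32$ for all $n\in\Z$.

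Now I force the dynamics along this orbit. Applying Lemma~\ref{LemmaA} to the bounded sequence $z_n:=T^n g$ yields a bounded $(y_n)$ with $y_{n+1}=Ty_n+T^n g$ and $\sup_n\|y_n\|\le\tfrac32 K$. The crucial point is that this particular forcing admits the \emph{exact} particular solution $p_n:=nT^{n-1}g$, since $Tp_n+T^n g=(n+1)T^n g=p_{n+1}$; therefore $(y_n-p_n)$ solves the homogeneous recursion and $T^n y_0=y_n-nT^{n-1}g$. Using the annulus bounds on $\|T^{n-1}g\|$, this gives $\tfrac12|n|-\tfrac32 K\le\|T^n y_0\|\le\tfrac32|n|+\tfrac32 K$, and $y_0\neq 0$ (otherwise $y_n=nT^{n-1}g$ would be unbounded). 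Hence $(\|T^n y_0\|)_{n\in\Z}$ is $O(|n|)$, the desired contradiction.

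The main obstacle I expect is the naive route of forcing directly with $z_n=\lambda^n x_k$: then the induced particular solution is only $\approx n\lambda^{n-1}x_k$, with an error accumulating like $\delta_k\sum_j\|T^j\|$ that cannot be controlled uniformly in $n$ for a fixed $k$, so one is driven to extract a norm limit of the vectors $y_0^{(k)}$ over a shrinking-$\delta_k$ family --- a step that fails in a general non-reflexive Banach space for lack of compactness (and which weak limits cannot rescue, since they do not preserve the lower bound defining $O(|n|)$). The idea that dissolves this difficulty is to spend the shadowing hypothesis first on the \emph{homogeneous} equation, converting the approximate eigenvector into an exact, uniformly bounded-below orbit $g$; forcing along $T^n g$ then has a genuinely exact particular solution $nT^{n-1}g$, so no errors accumulate and no limiting or compactness argument is needed.
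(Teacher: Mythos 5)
Your proof is correct and takes essentially the same route as the paper's: both argue the contrapositive via Hedlund's characterization $\sigma_a(T)\cap\T\neq\emptyset$ and apply shadowing twice, first to upgrade the approximate unit-modulus eigenvector to an exact orbit trapped in an annulus, then to the linearly weighted forcing along that orbit to produce a nonzero vector whose orbit is $O(|n|)$. Your use of Lemma~\ref{LemmaA} with the exact particular solution $nT^{n-1}g$ is just a reformulation of the paper's second pseudotrajectory $z_n=\frac{n\delta}{3}\,T^ny$.
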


\begin{proof}
Suppose that $T$ has the shadowing property and let $\delta > 0$ be the
constant that appears in the definition of shadowing associated to $\eps = 1$.
Assume that $T$ is not uniformly expansive. Then, by \cite[Theorem 1]{JHed71},
the intersection $\sigma_a(T) \cap \T$ is nonempty. Take a scalar $\lambda$
in this intersection. Hence, there exists $x_0 \in X$ such that
$$
\|x_0\| = 1 \ \ \ \text{ and } \ \ \ \|\lambda x_0 - Tx_0\| < \frac{\delta}{2}\cdot
$$
For each $n \in \Z$, let $y_n = 2\lambda^n x_0$. Then $(y_n)_{n \in \Z}$ is a
$\delta$-pseudotrajectory of $T$, and so there exists $y \in X$ such that
$\|y_n - T^ny\| < 1$ for all $n \in \Z$. Therefore,
$$
1 < \|T^ny\| < 3 \ \ \text{ for all } n \in \Z.
$$
Now, consider the sequence $(z_n)_{n \in \Z}$ defined by
$z_n = \frac{n \delta}{3} \, T^ny$ for all $n \in \Z$.
Since $(z_n)_{n \in \Z}$ is a $\delta$-pseudotrajectory of $T$, there exists
$z \in X$ such that $\|z_n - T^nz\| < 1$ for all $n \in \Z$. Thus,
$$
\frac{|n| \delta}{3} - 1 < \|T^n z\| < |n| \delta + 1 \ \
  \text{ for all } n \in \Z.
$$
This contradicts the fact that $(\|T^nz\|)_{n \in \Z}$ is not $O(|n|)$.
\end{proof}

\begin{corollary}\label{NormalShadowing1}
If $T$ is an invertible normal operator on a Hilbert space $H$, then
$T$ has the shadowing property if and only if $T$ is hyperbolic.
\end{corollary}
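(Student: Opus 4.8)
The plan is to read this off from Theorem~\ref{ShadowingOn}, supplemented by the spectral characterization of uniform expansivity. One implication is free: if $T$ is hyperbolic then $T$ has the shadowing property by Corollary~\ref{HS}. So the entire content lies in the reverse implication, and the key point I would establish is that \emph{every} invertible normal operator satisfies the hypothesis of Theorem~\ref{ShadowingOn}, i.e.\ that $(\|T^nz\|)_{n \in \Z}$ fails to be $O(|n|)$ for every $z \in H$. Once this is in place, Theorem~\ref{ShadowingOn} turns the shadowing property into uniform expansivity, from which hyperbolicity will follow.

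To verify the growth hypothesis I would pass to the positive invertible self-adjoint operator $S = T^*T$. Normality gives $(T^*)^nT^n = (T^*T)^n$ for $n \geq 0$, and since $T^{-1}$ is again normal with $(T^{-1})^*T^{-1} = (T^*T)^{-1}$, one obtains $\|T^nz\|^2 = \langle S^nz,z \rangle$ for all $n \in \Z$. As $S$ is invertible and positive we have $\sigma(S) \subset (0,\infty)$, so writing $\mu$ for the spectral measure associated to $S$ and a fixed $z \neq 0$ yields $\|T^nz\|^2 = \int_{\sigma(S)} t^n\,d\mu(t)$. I would then look only at the tail $n \to +\infty$ and split according to the mass of $\mu$ beyond $1$. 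If $\mu((1,\infty)) > 0$, then $\mu([1+\eps,\infty)) > 0$ for some $\eps > 0$, so $\|T^nz\|^2 \geq (1+\eps)^n\,\mu([1+\eps,\infty))$ grows exponentially and the upper bound required by $O(|n|)$ cannot hold. If instead $\mu((1,\infty)) = 0$, then $\|T^nz\|^2 = \int_{(0,1]} t^n\,d\mu \leq \mu(\sigma(S)) = \|z\|^2$, so the forward tail is bounded and the lower bound $\alpha n \leq \|T^nz\|$ fails. Either way $(\|T^nz\|)_{n \in \Z}$ is not $O(|n|)$, the case $z = 0$ being trivial. This dichotomy is the only real work in the argument; the delicate point is merely to check that the two alternatives genuinely exhaust all $z$ and that each one blocks $O(|n|)$ behaviour.

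With the hypothesis confirmed, Theorem~\ref{ShadowingOn} gives that $T$ is uniformly expansive, and hence $\sigma_a(T) \cap \T = \emptyset$ by Hedlund's result \cite{JHed71} recalled above. To finish I would invoke the standard fact that normal operators have empty residual spectrum, so that $\sigma_a(T) = \sigma(T)$; together with the previous sentence this forces $\sigma(T) \cap \T = \emptyset$, i.e.\ $T$ is hyperbolic. Combined with the easy converse noted at the outset, this completes the proof.
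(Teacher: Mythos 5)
Your proposal is correct and follows essentially the same route as the paper: both reduce the statement to Theorem~\ref{ShadowingOn} together with the identity $\|T^nz\|^2=\langle (T^*T)^nz,z\rangle=\int_{\sigma(T^*T)}t^n\,d\mu(t)$ and the fact that $\sigma_a(T)=\sigma(T)$ for normal operators. The only (cosmetic) difference is organizational: you verify directly that no nonzero orbit of an invertible normal operator is $O(|n|)$ via the dichotomy on $\mu((1,\infty))$, whereas the paper argues by contradiction, using the $O(|n|)$ upper bound to force $\mu$ to concentrate at $1$ and then contradicting the lower bound.
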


\begin{proof}
Suppose that $T$ has the shadowing property but is not hyperbolic.
Since $T$ is normal, $\sigma(T) = \sigma_a(T)$, and so $T$ is not uniformly
expansive. Hence, by Theorem~\ref{ShadowingOn}, there exists $z \in H$
such that $(\|T^nz\|)_{n \in \Z}$ is $O(|n|)$. Let $\alpha > 0$, $\beta > 0$
and $n_0 \in \N$ be such that
\begin{equation}
\alpha |n| \leq \|T^nz\| \leq \beta |n| \ \ \text{ whenever } |n| \geq n_0.
\label{On}
\end{equation}
Consider the invertible positive operator $S = T^*T$ and let $\mu$ be the
spectral measure associated to $S$ and $z$. Then,
$$
0 \leq \int_{\sigma(S)} t^n d\mu(t) = \langle S^nz,z \rangle = \|T^nz\|^2
  \leq \beta^2 |n|^2  \ \ \text{ whenever } |n| \geq n_0.
$$
By arguing as in the proof of Theorem~\ref{NormalExp} (with the sets
$A_\alpha$ and $B_\beta$), we see that $\sigma(S) \backslash \{1\}$ has
$\mu$-measure zero and so $Sz = z$. This implies that $\|T^nz\| = \|z\|$
for all $n \in \Z$, which contradicts the first inequality in (\ref{On}).
\end{proof}

The next proposition gives another additional condition under which shadowing
implies uniform expansivity (compare with Theorem~\ref{ShadowingOn}).

\begin{proposition}\label{ESPUE}
Let $T$ be an invertible operator on a Banach space $X$. If $T$ is
expansive and has the shadowing property, then $T$ is uniformly expansive.
\end{proposition}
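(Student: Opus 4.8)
The plan is to argue by contraposition using the spectral characterization of uniform expansivity recalled in this section. Suppose $T$ has the shadowing property but is \emph{not} uniformly expansive. By Hedlund's theorem \cite{JHed71}, the intersection $\sigma_a(T) \cap \T$ is then nonempty; I would fix a scalar $\lambda$ in it, so that $|\lambda| = 1$ and $\lambda$ admits approximate eigenvectors in $S_X$.

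Next I would manufacture a bounded pseudotrajectory that shadowing forces to be tracked by a genuine orbit trapped inside an annulus. By Remark~\ref{Shad1} it suffices to work with a single $\eps$, say $\eps = 1$; let $\delta > 0$ be the associated shadowing constant. Using $\lambda \in \sigma_a(T)$, choose $x_0 \in S_X$ with $\|\lambda x_0 - T x_0\| < \delta/2$, and set $y_n = 2\lambda^n x_0$ for $n \in \Z$. Since $|\lambda| = 1$, one checks that $\|T y_n - y_{n+1}\| = 2\|T x_0 - \lambda x_0\| < \delta$ for every $n$, so $(y_n)_{n \in \Z}$ is a $\delta$-pseudotrajectory of $T$. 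Shadowing then produces $y \in X$ with $\|y_n - T^n y\| < 1$ for all $n \in \Z$.

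From $\|y_n\| = 2$ I would read off that $1 < \|T^n y\| < 3$ for every $n \in \Z$. In particular $y \neq 0$ (because $\|y\| > 1$), while $\sup_{n \in \Z} \|T^n y\| \leq 3 < \infty$. This contradicts the expansivity of $T$: by Proposition~\ref{Characterizations}(c), expansivity forces $\sup_{n \in \Z} \|T^n z\| = \infty$ for every nonzero $z \in X$. Hence $T$ must be uniformly expansive.

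The construction is essentially the first half of the argument for Theorem~\ref{ShadowingOn}, and I expect the only point that genuinely matters is recognizing that expansivity already rules out a \emph{bounded} nonzero orbit. Thus, in contrast to Theorem~\ref{ShadowingOn}, there is no need for the second, linearly growing pseudotrajectory $z_n = \frac{n\delta}{3}\,T^n y$ used there; the trapped orbit $y$ delivers the contradiction immediately. The two facts to invoke cleanly are Hedlund's spectral criterion for uniform expansivity \cite{JHed71} and the orbit characterization of expansivity in Proposition~\ref{Characterizations}(c).
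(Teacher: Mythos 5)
Your proof is correct and follows essentially the same route as the paper, whose proof of Proposition~\ref{ESPUE} simply cites the construction from Theorem~\ref{ShadowingOn} to produce a nonzero vector $y$ with $1 < \|T^n y\| < 3$ for all $n \in \Z$ and then invokes expansivity for the contradiction. Your observation that only the first pseudotrajectory (and not the linearly growing one) is needed matches exactly what the paper does.
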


\begin{proof}
Suppose that $T$ is not uniformly expansive and fix a scalar
$\lambda \in \sigma_a(T) \cap \T$. By arguing as in the proof of
Theorem~\ref{ShadowingOn}, we obtain a vector $y \in X$ such that
$$
1 < \|T^ny\| < 3 \ \ \text{ for all } n \in \Z.
$$
This contradicts the hypothesis that $T$ is expansive.
\end{proof}


\section{Positive shadowing}

\shs
In the case $h$ is a continuous self-map of a metric space $M$, we can define
the notion of {\em positive shadowing} simply by replacing the set $\Z$ by
the set $\N_0$ in the definition of shadowing. Similarly, we can define the
notions of {\em positive limit shadowing} and {\em positive $\ell_p$
shadowing} for such a map $h$.

\smallskip
Remark \ref{Shad1}, Proposition \ref{Shad2} and Corollary \ref{Shad3} have
analogous statements for not necessarily invertible
operators if we add the word ``positive'' to the corresponding notions
of shadowing.

\medskip
\noindent {\bf Theorem E.}
{\it Let $T$ be a (not necessarily invertible) operator on a Banach space $X$.
If $T$ is hyperbolic, then $T$ has the positive shadowing property, the
positive limit shadowing property and the positive $\ell_p$ shadowing
property for all $1 \leq p < \infty$.}

\medskip
\begin{proof}
We divide the proof in three cases.

\medskip
\noindent {\bf Case 1.} $\sigma(T) \subset \D$.

\medskip
Then there exist $t \in (0,1)$ and $C \geq 1$ such that
$$
\|T^n\| \leq C\, t^n \ \ \text{ for all } n \in \N_0.
$$

Given $\eps > 0$, put $\delta = \frac{(1-t)\eps}{C}\cdot$
Let $(x_n)_{n \in \N_0}$ be a $\delta$-pseudotrajectory of $T$ and define
$y_n = x_n - Tx_{n-1}$ for $n \in \N$. Then
\begin{equation}
x_n = T^nx_0 + T^{n-1}y_1 + T^{n-2}y_2 + \cdots + Ty_{n-1} + y_n \ \
  \text{ for all } n \in \N. \label{PSha1}
\end{equation}
Since $\|y_n\| \leq \delta$ for all $n \in \N$, we conclude that
$$
\|x_n - T^nx_0\| \leq Ct^{n-1}\delta + Ct^{n-2}\delta + \cdots + Ct\delta + \delta
  < \frac{C\delta}{1-t} = \eps \ \ \ (n \in \N).
$$
Hence, $(x_n)_{n \in \N_0}$ is $\eps$-shadowed by $(T^nx_0)_{n \in \N_0}$.
This proves that $T$ has the positive shadowing property.

Let $(x_n)_{n \in \N_0}$ be a sequence in $X$ with
$$
\lim_{n \to \infty} \|Tx_n - x_{n+1}\| = 0.
$$
Let $y_n$ be defined as above. By (\ref{PSha1}),
\begin{align*}
\|x_n - T^n x_0\|
  &\leq Ct^{n-1}\|y_1\| + Ct^{n-2}\|y_2\| + \cdots + Ct\|y_{n-1}\| + \|y_n\|\\
  &\leq C \Bigg(\sum_{k=0}^j t^k\Bigg)
          \Bigg(\sup_{0 \leq k \leq j} \|y_{n-k}\|\Bigg)
      + C \Bigg(\sum_{k=j+1}^{n-1} t^k\Bigg)
          \Bigg(\sup_{k \in \N} \|y_k\|\Bigg),
\end{align*}
whenever $0 < j < n$. Since $\|y_n\| \to 0$, the above estimate implies that
$\|x_n - T^nx_0\| \to 0$ as well. Thus, $T$ has the positive limit shadowing
property.

Now, suppose that
$$
\sum_{n=0}^\infty \|Tx_n - x_{n+1}\| < \infty.
$$
Then, by (\ref{PSha1}),
\begin{align*}
\sum_{n=0}^\infty \|x_n - T^n x_0\|
  &\leq C \sum_{n=1}^\infty \sum_{k=0}^{n-1} t^k \|y_{n-k}\|
      = C \sum_{k=0}^\infty \sum_{n=k+1}^\infty t^k \|y_{n-k}\|\\
  &=    C \Bigg(\sum_{k=0}^\infty t^k\Bigg)
          \Bigg(\sum_{n=1}^\infty \|y_n\|\Bigg) < \infty.
\end{align*}

Finally, suppose that $1 < p < \infty$ and that
$$
\sum_{n=0}^\infty \|Tx_n - x_{n+1}\|^p < \infty.
$$
By (\ref{PSha1}) and H\"older's inequality,
$$
\|x_n - T^n x_0\|
  \leq C \sum_{k=0}^{n-1} t^k\|y_{n-k}\|
  \leq C \Bigg(\sum_{k=0}^{n-1} t^\frac{qk}{2}\Bigg)^\frac{1}{q}
         \Bigg(\sum_{k=0}^{n-1} t^\frac{pk}{2} \|y_{n-k}\|^p\Bigg)^\frac{1}{p},
$$
where $q$ is the conjugate exponent to $p$. Thus,
\begin{align*}
\sum_{n=0}^\infty \|x_n - T^n x_0\|^p
  \leq    C^p \Bigg(\sum_{k=0}^\infty t^\frac{qk}{2}\Bigg)^\frac{p}{q}
            \Bigg(\sum_{k=0}^\infty t^\frac{pk}{2}\Bigg)
            \Bigg(\sum_{n=1}^\infty \|y_n\|^p\Bigg) < \infty.
\end{align*}
Therefore, $T$ also has the positive $\ell_p$ shadowing property.

\medskip
\noindent {\bf Case 2.} $\sigma(T) \subset \C \backslash \ov{\D}$.

\medskip
Then $T$ is invertible and we can apply Corollary~\ref{HS}.

\medskip
\noindent {\bf Case 3.} $\sigma(T) \cap \D \neq \emptyset$ and
$\sigma(T) \cap (\C \backslash \ov{\D}) \neq \emptyset$.

\medskip
In this case, the sets $\sigma_1 = \sigma(T) \cap \D$ and
$\sigma_2 = \sigma(T) \cap (\C \backslash \ov{\D})$ form a partition of
$\sigma(T)$ into two nonempty closed sets. By the Riesz decomposition
theorem \cite[Theorem~B.9]{KGroAPer11}, there are nontrivial $T$-invariant
closed subspaces $M_1$ and $M_2$ of $X$ such that
$$
X = M_1 \oplus M_2, \ \ \ \sigma(T|_{M_1}) = \sigma_1 \ \ \text{ and } \ \
\sigma(T|_{M_2}) = \sigma_2.
$$
By Cases 1 and 2, both $T|_{M_1}$ and $T|_{M_2}$ have the positive shadowing
property, the positive limit shadowing property and the positive $\ell_p$
shadowing property for all $1 \leq p < \infty$, from which it follows easily
that $T$ also has these properties.
\end{proof}

\begin{remark}\label{NotConverse}
The converse to Theorem~E is not true in general. Indeed, the operator
constructed in the proof of Theorem~D has the positive shadowing property,
the positive limit shadowing property and the positive $\ell_p$ shadowing
property for all $1 \leq p < \infty$, but it is not hyperbolic.
\end{remark}

Let us now prove that all notions of positive shadowing considered here
coincide with hyperbolicity in the case of compact operators. First, let us
consider the case of finite-dimensional spaces.

\begin{lemma}\label{FD2}
Fix $p \in [1,\infty)$. If $T$ is an operator on a finite-dimensional Banach
space $X$, then the following assertions are equivalent:
\begin{enumerate}
\item [\rm (i)]   $T$ is hyperbolic;
\item [\rm (ii)]  $T$ has the positive shadowing property;
\item [\rm (iii)] $T$ has the positive limit shadowing property;
\item [\rm (iv)]  $T$ has the positive $\ell_p$ shadowing property.
\end{enumerate}
\end{lemma}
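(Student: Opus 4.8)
The plan is to obtain the implication (i) $\Rightarrow$ (ii), (iii), (iv) directly from Theorem~E, and to prove the three reverse implications at once by showing that if $T$ is \emph{not} hyperbolic then it fails each of the positive shadowing properties. I would first reduce to a single Jordan block. Since $T$ is not hyperbolic, $\sigma(T)$ contains some $\lambda \in \T$; using the primary (generalized eigenspace) decomposition together with the positive analogue of Proposition~\ref{Shad2} (valid by the remark preceding Theorem~E), the positive shadowing properties of $T$ are equivalent to those of its restriction to each invariant summand. Decomposing further into Jordan blocks, it suffices to assume that $T$ is a single Jordan block with eigenvalue $\lambda$, $|\lambda| = 1$. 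Replacing $T$ by $\ov{\lambda} T$ --- which preserves all three positive shadowing properties, because the substitution $x_n \mapsto \lambda^{-n} x_n$ turns a (limit, $\ell_p$) pseudotrajectory of $\ov{\lambda} T$ into one of $T$ and sets up a bijection between the corresponding shadowing orbits, all the relevant decay and summability conditions being unchanged since $|\lambda^n| = 1$ --- and conjugating by a diagonal isomorphism, I may assume that $T$ is exactly one of the matrices in~(\ref{Sha4}), i.e. that the eigenvalue is $1$.

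The key feature of these matrices is that the first coordinate is $T$-invariant, so $(T^n x)_1 = (x)_1$ for every $x$ and every $n \in \N_0$. For the failure of (ii) I would take the one-sided $\delta$-pseudotrajectory $x_n$ with first coordinate $n\delta$ and remaining coordinates copied from $Tx_{n-1}$; then $\|x_{n+1} - Tx_n\| = \delta$, while $\|x_n - T^n x\| \geq |n\delta - (x)_1| \to \infty$ for every $x$, so no orbit can $\eps$-shadow it (say, for $\eps = 1$), and since $\delta > 0$ is arbitrary, positive shadowing fails. For (iii) and (iv) the sequences are precisely those in the proof of Proposition~\ref{FD1}, now read as one-sided sequences indexed by $\N_0$: for $p > 1$ (and for limit shadowing) the first coordinate of $x_{n+1}$ is the partial harmonic sum $1 + \tfrac12 + \cdots + \tfrac1{n+1}$, giving $\|Tx_n - x_{n+1}\| = \tfrac1{n+1}$ yet a first coordinate tending to infinity; for $p = 1$ one uses first coordinate $\tfrac1{n+1}$ instead. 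In each case the invariance of the first coordinate forces $\sum \|x_n - T^n x\|^p = \infty$ (or prevents $\|x_n - T^n x\| \to 0$) for every $x \in X$.

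I expect the only genuine subtlety to be the bookkeeping of the reductions: verifying that positive shadowing, positive limit shadowing and positive $\ell_p$ shadowing are each simultaneously inherited by and reconstructed from invariant direct summands (the positive form of Proposition~\ref{Shad2}), and that they are preserved under multiplication by a unimodular scalar and under linear conjugacy. Once these invariances are in place, the matrices in~(\ref{Sha4}) are the only cases to treat, and the pseudotrajectory constructions above are routine adaptations of Proposition~\ref{FD1}; the one extra ingredient, absent there, is the linearly growing forcing needed to break plain positive shadowing in part (ii).
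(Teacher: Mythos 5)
Your proposal is correct and follows essentially the same route as the paper: Theorem~E for the forward direction, and for the converse the same reduction (unimodular rescaling, Jordan form, and the positive analogue of Proposition~\ref{Shad2}) to the matrices in~(\ref{Sha4}), followed by the identical linearly growing pseudotrajectory for (ii) and the harmonic-sum sequences from Proposition~\ref{FD1}, read one-sidedly, for (iii) and (iv). The paper merely compresses the reduction by citing the argument of Proposition~\ref{FD1} and omits the (iii)/(iv) constructions as "similar," whereas you spell them out.
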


\begin{proof}
Suppose that $T$ is not hyperbolic. We have to prove that (ii), (iii) and
(iv) are all false. By arguing as in the proof of Theorem~\ref{FD1}, we
see that it is enough to consider the cases in which $T$ is an operator on
$\C^k$ (with the euclidean norm) whose canonical matrix has one of the forms
in (\ref{Sha4}).

Fix $\delta > 0$ and let $(x_n)_{n \in \N_0}$ in $\C^k$ be given by
$x_0 = (0,\ldots,0)$ and
$$
x_{n+1} = \big((n+1)\delta,(Tx_n)_2,\ldots,(Tx_n)_k\big)
  \ \ \text{ for } n \in \N_0.
$$
Then $(x_n)_{n \in \N_0}$ is a $\delta$-pseudotrajectory of $T$ that cannot
be shadowed by the real trajectory of any point $x \in \C^k$, because
$$
\|x_n - T^nx\| \geq |(x_n - T^nx)_1| = |n\delta - (x)_1| \to \infty
  \ \ \text{ as } n \to \infty.
$$
So, $T$ does not have the positive shadowing property.

The proofs that $T$ does not have the positive limit shadowing property
and does not have the positive $\ell_p$ shadowing property are similar
to the corresponding proofs in Theorem~\ref{FD1} and so we omit them.
\end{proof}

\begin{theorem}\label{CompactShadowing}
Fix $p \in [1,\infty)$. If $T$ is a compact operator on a Banach space $X$,
then the following assertions are equivalent:
\begin{enumerate}
\item [\rm (i)]   $T$ is hyperbolic;
\item [\rm (ii)]  $T$ has the positive shadowing property;
\item [\rm (iii)] $T$ has the positive limit shadowing property;
\item [\rm (iv)]  $T$ has the positive $\ell_p$ shadowing property.
\end{enumerate}

\end{theorem}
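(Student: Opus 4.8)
The plan is to prove the two directions separately, with the forward direction being immediate and the reverse direction reducing to the finite-dimensional case already settled in Lemma~\ref{FD2}. The implication (i) $\Rightarrow$ (ii), (iii), (iv) is exactly Theorem~E applied to $T$, so nothing needs to be done there. For the converse it suffices to establish the contrapositive: assuming that $T$ is \emph{not} hyperbolic, I would show that each of (ii), (iii) and (iv) fails.

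So suppose $\sigma(T) \cap \T \neq \emptyset$ and fix $\lambda \in \sigma(T) \cap \T$. The key structural input is the Riesz--Schauder theory of compact operators: since $T$ is compact and $\lambda \neq 0$, the point $\lambda$ is an isolated point of $\sigma(T)$ and an eigenvalue of finite algebraic multiplicity, the only possible accumulation point of $\sigma(T)$ being $0$. Consequently the spectral projection associated with the singleton $\{\lambda\}$ produces a Riesz decomposition $X = M_1 \oplus M_2$ into closed $T$-invariant subspaces with $M_1$ finite-dimensional, $\sigma(T|_{M_1}) = \{\lambda\}$ and $\sigma(T|_{M_2}) = \sigma(T) \setminus \{\lambda\}$.

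I would then invoke the positive analogue of Proposition~\ref{Shad2} (valid by the remark that these statements carry over verbatim to not necessarily invertible operators upon adding the word ``positive''): $T$ has the positive shadowing property (respectively the positive limit, respectively the positive $\ell_p$ shadowing property) if and only if both $T|_{M_1}$ and $T|_{M_2}$ do. Since $\sigma(T|_{M_1}) = \{\lambda\} \subset \T$, the finite-dimensional operator $T|_{M_1}$ is not hyperbolic, so Lemma~\ref{FD2} guarantees that $T|_{M_1}$ enjoys none of the three positive shadowing properties. Therefore $T$ itself fails each of (ii), (iii) and (iv), which completes the contrapositive and hence the theorem.

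The main obstacle, and really the only nonroutine point, is the spectral bookkeeping that lets the problem collapse onto a finite-dimensional summand: one must verify that compactness forces the offending spectral point on $\T$ to be isolated and of finite multiplicity, so that the Riesz projection yields a genuinely \emph{finite-dimensional} complemented invariant piece on which Lemma~\ref{FD2} can be applied. Once this reduction is secured, the remainder is a direct appeal to Theorem~E, the positive version of Proposition~\ref{Shad2}, and Lemma~\ref{FD2}.
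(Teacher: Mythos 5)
Your proposal is correct and follows essentially the same route as the paper: the forward direction is Theorem~E, and the converse is reduced via the Riesz decomposition theorem and compactness to a finite-dimensional invariant summand on which Lemma~\ref{FD2} applies, using the positive analogue of Proposition~\ref{Shad2}. The only (immaterial) difference is the choice of spectral set: you isolate the single point $\lambda\in\sigma(T)\cap\T$, whereas the paper splits off all of $\sigma(T)\setminus\D$, which is likewise finite-dimensional by Riesz--Schauder theory.
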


\begin{proof}
Suppose that $T$ has the positive shadowing property (the positive limit
shadowing property, the positive $\ell_p$ shadowing property). We have to
prove that $T$ is hyperbolic.
We may assume that $X$ is infinite-dimensional (because of Lemma~\ref{FD2})
and that $\sigma(T)$ is not contained in $\D$.
Since $T$ is a compact operator, it follows that the sets
$\sigma_1 = \sigma(T) \cap \D$ and $\sigma_2 = \sigma(T) \backslash \D$
form a partition of $\sigma(T)$ into two nonempty closed sets.
By the Riesz decomposition theorem, there are nontrivial $T$-invariant
closed subspaces $M_1$ and $M_2$ of $X$ such that
$$
X = M_1 \oplus M_2, \ \ \ \sigma(T|_{M_1}) = \sigma_1 \ \ \text{ and } \ \
\sigma(T|_{M_2}) = \sigma_2.
$$
The compactness of $T$ also implies that $M_2$ is finite-dimensional.
Hence, since $T|_{M_2}$ has the positive shadowing property (the positive
limit shadowing property, the positive $\ell_p$ shadowing property),
Lemma~\ref{FD2} tell us that $\sigma_2 \cap \T = \emptyset$.
Thus, $\sigma(T) \cap \T = \emptyset$.
\end{proof}

Let us now show that the notions of hyperbolicity and positive shadowing
coincide for normal operators.

\begin{theorem}\label{NormalShadowing2}
If $T$ is a normal operator on a Hilbert space $H$, then
$T$ has the positive shadowing property if and only if $T$ is hyperbolic.
\end{theorem}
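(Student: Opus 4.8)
The forward implication is immediate from Theorem~E, since every hyperbolic operator has the positive shadowing property. The whole content is therefore the converse, which I would prove by contraposition: assuming $T$ is normal and has the positive shadowing property, I would deduce hyperbolicity. So suppose $\sigma(T) \cap \T \neq \emptyset$ and fix $\lambda \in \sigma(T) \cap \T$. Since $T$ is normal, $\sigma(T) = \sigma_a(T)$, so $\lambda \in \sigma_a(T)$, and the plan is to construct a $\delta$-pseudotrajectory that cannot be positively $\eps$-shadowed.

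The construction mirrors the one in the proofs of Theorem~\ref{ShadowingOn} and Proposition~\ref{ESPUE}, but carried out over $\N_0$ rather than $\Z$, which is exactly what the positive shadowing property sees. Let $\delta > 0$ be the shadowing constant associated to $\eps = 1$ (a single value of $\eps$ suffices, by the positive analogue of Remark~\ref{Shad1}). Using $\lambda \in \sigma_a(T) \cap \T$, I would choose $x_0 \in S_H$ with $\|\lambda x_0 - T x_0\| < \delta/2$; then $y_n = 2\lambda^n x_0$ ($n \in \N_0$) is a $\delta$-pseudotrajectory, so there is $y$ with $1 < \|T^n y\| < 3$ for all $n \in \N_0$. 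Feeding this back in, $z_n = \tfrac{n\delta}{3}\,T^n y$ is again a $\delta$-pseudotrajectory (here the upper bound $\|T^n y\| < 3$ is used), and shadowing it yields a vector $z$ with
$$
\frac{n\delta}{3} - 1 < \|T^n z\| < n\delta + 1 \qquad \text{for all } n \in \N_0.
$$
Thus $\|T^n z\|$ grows linearly in $n$: it is unbounded above but obeys an at-most-polynomial upper bound.

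The crux is to turn this linear growth into a contradiction using normality; this step replaces the ``not $O(|n|)$'' hypothesis of Theorem~\ref{ShadowingOn}, which is unavailable here because $T$ need not be invertible. I would put $S = T^*T$ and let $\mu$ be the spectral measure associated to $S$ and $z$. Normality gives $(T^*)^n T^n = (T^*T)^n$, so
$$
\|T^n z\|^2 = \langle S^n z, z\rangle = \int_{\sigma(S)} t^n \, d\mu(t) \qquad (n \in \N_0).
$$
If $\mu\big(\sigma(S) \cap (1,\infty)\big) > 0$, then $\mu\big(\sigma(S) \cap [\beta,\infty)\big) > 0$ for some $\beta > 1$, forcing $\|T^n z\|^2 \geq \beta^n \mu\big(\sigma(S)\cap[\beta,\infty)\big)$ to grow exponentially and contradicting the polynomial bound $\|T^n z\|^2 < (n\delta+1)^2$. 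Hence $\mu$ is concentrated on $\sigma(S) \cap [0,1]$, where $t^n \leq 1$, which gives $\|T^n z\|^2 \leq \mu(\sigma(S)) = \|z\|^2$ for every $n$. So $(\|T^n z\|)_{n \in \N_0}$ is bounded, contradicting $\|T^n z\| > \tfrac{n\delta}{3} - 1 \to \infty$. This contradiction shows $\sigma(T) \cap \T = \emptyset$, i.e.\ $T$ is hyperbolic. I expect the spectral-measure dichotomy of exponential versus polynomial growth to be the only genuinely new point; the pseudotrajectory construction is a direct transcription of the earlier arguments.
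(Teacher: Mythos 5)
Your proposal is correct and follows essentially the same route as the paper: the paper likewise transplants the two-stage pseudotrajectory construction of Theorem~\ref{ShadowingOn} to $\N_0$ to produce $z$ with $\frac{n\delta}{3}-1<\|T^nz\|<n\delta+1$, and then uses the spectral measure of $S=T^*T$ at $z$ to show the polynomial upper bound forces $\mu(\sigma(S)\cap(1,\infty))=0$, hence $\|T^nz\|$ bounded, contradicting the linear lower bound. The only cosmetic difference is that you phrase the argument as a contraposition and spell out the pseudotrajectory details that the paper cites by reference.
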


\begin{proof}
Suppose that $T$ has the positive shadowing property.
Assume that $T$ is not hyperbolic and argue as in the proof of
Theorem~\ref{ShadowingOn} to obtain a vector $z \in H$ such that
\begin{equation}
\frac{n \delta}{3} - 1 < \|T^nz\| < n \delta + 1 \ \
  \text{ for all } n \in \N_0. \label{B}
\end{equation}
Consider the positive operator $S = T^*T$ and let $\mu$ be the spectral
measure associated to $S$ and $z$. Since
$$
0 \leq \int_{\sigma(S)} t^n d\mu(t) = \langle S^nz,z \rangle = \|T^nz\|^2
  \leq (n\delta + 1)^2 \ \ \text{ for all } n \in \N_0,
$$
it follows that $\sigma(S) \cap (1,\infty)$ has $\mu$-measure zero.
Hence, $\|T^nz\| \leq (\mu(\sigma(S)))^\frac{1}{2}$ for all $n \in \N_0$,
which contradicts (\ref{B}).
\end{proof}


\smallskip
\noindent Nilson C. Bernardes Jr.\\
{\it Departamento de Matem\'atica Aplicada, Instituto de Matem\'atica,
Universidade Federal do Rio de Janeiro, Caixa Postal 68530, Rio de Janeiro,
RJ, 21945-970, Brasil.}\\
{\it e-mail:} bernardes@im.ufrj.br

\medskip
\noindent Patricia R. Cirilo\\
{\it Instituto de Ci\^encia e Tecnologia, Universidade Federal de S\~ao Paulo,
Avenida Cesare Mansueto Giulio Lattes, n$^{\rm o}$ 1201, S\~ao Jos\'e dos
Campos, SP, 12247-014, Brasil.}\\
{\it e-mail:} pcirilo@unifesp.br

\medskip
\noindent Udayan B. Darji\\
{\it Department of Mathematics, University of Louisville, Louisville,
KY 40292, USA.}\\
{\it Department of Mathematics, Ashoka University, Rajiv Gandhi Education City Kundli, Rai 131029, India}\\
{\it e-mail:} ubdarj01@louisville.edu

\medskip
\noindent Ali Messaoudi\\
{\it Departamento de Matem\'atica, Universidade Estadual Paulista,
Rua Crist\'ov\~ao Colombo, 2265, Jardim Nazareth, S\~ao Jos\'e do Rio Preto,
SP, 15054-000, Brasil.}\\
{\it e-mail:} messaoud@ibilce.unesp.br

\medskip
\noindent Enrique R. Pujals\\
{\it Instituto Nacional de Matem\'atica Pura e Aplicada, Estrada Dona
Castorina 110, Rio de Janeiro, RJ, 22460-320, Brasil.}\\
{\it e-mail:} enrique@impa.br

\end{document}